\newcolumntype{P}[1]{>{\centering\arraybackslash}p{#1}}
\theoremstyle{plain}
\newtheorem{theorem}{Theorem}
\newtheorem{lemma}{Lemma}
\newtheorem{remark}{Remark}
\newtheorem{definition}{Definition}
\newtheorem{assumption}{Assumption}
\DeclarePairedDelimiter{\abs}{\lvert}{\rvert}
\DeclarePairedDelimiter{\norm}{\lVert}{\rVert}
\begin{document}

\title{A derivative-free optimization algorithm for the efficient minimization of functions obtained via statistical averaging} 
\author{Pooriya Beyhaghi \and
Ryan Alimo
\and Thomas Bewley}

\maketitle

\begin{abstract}
This paper considers the efficient minimization of the infinite time average of a stationary ergodic process in the space of a handful of design parameters which affect it.  Problems of this class, derived from physical or numerical experiments which are sometimes expensive to perform, are ubiquitous in engineering applications. In such problems, any given function evaluation, determined with finite sampling, is associated with a quantifiable amount of uncertainty, which may be reduced via additional sampling.
The present paper proposes a new optimization algorithm to adjust the amount of sampling associated with each function evaluation, making function evaluations more accurate (and, thus, more expensive), as required, as convergence is approached.  The work
builds on our algorithm for Delaunay-based Derivative-free Optimization via Global Surrogates ($\Delta$-DOGS, see JOGO DOI: 10.1007/s10898-015-0384-2).  The new algorithm, dubbed $\alpha$-DOGS, substantially reduces the overall cost of the optimization process for problems of this important class. Further, under certain well-defined conditions, rigorous proof of convergence to the global minimum of the problem considered is established. 
\end{abstract}

\section{Introduction}

In this paper, the Delaunay-based derivative-free optimization algorithm developed in \cite{alimo2017delaunay,alimo2017optimization,zhao2018active, beyhaghi_2,beyhaghi_1, beyhaghi_3}, dubbed $\Delta$-DOGS, is modified
to minimize an objective function, $f(x): \mathbb{R}^n \rightarrow \mathbb{R}$, which cannot be evaluated directly, but can be approximated to a tuneable degree of precision.  As the precision of any function evaluation is increased, the computational (or experimental) cost of that function evaluation is increased accordingly. An example for this type of objective function is the infinite-time average of a discrete-time ergodic process $g(x,k)$ for $k=1,2,3,\ldots$ such that
\begin{subequations}
\label{eq:ProblemStatement}
\begin{equation}
\label{eq:infinitecost}
f(x)=\lim _{N \rightarrow \infty} \frac{1}{N} \sum_{k=1}^N g(x,k), 
\end{equation}
where, for $k\geq \bar k$, 
$g(x,k)$ is assumed to be statistically stationary. The feasible domain in which the optimal design parameter vector $x\in\mathbb{R}^n$ is sought is a bound constrained domain
\begin{equation}
\label{eq:feasibledomain}
L=\{x | a \le x \le b\} \quad
\textrm{where}\  
a<b \in \mathbb{R}^n.
\end{equation}
\end{subequations}
In practice, the precise numerical determination of $f(x)$ for any $x$ is not possible, as this would require infinite time averaging; $f(x)$ can only be approximated as the average of $g(x,k)$ over some \textit{finite} number of samples $N$.  The truth function $f(x)$ is typically a smooth function of $x$, though it is often nonconvex; computable approximations of $f(x)$, however, are generally \textit{nonsmooth} in $x$, as the truncation error (associated with the fact any approximation of $f(x)$ must be computed with finite $N$) is effectively decorrelated from one approximation of $f(x)$ to the next.

Minimizing
\eqref{eq:infinitecost} within
the feasible domain
\eqref{eq:feasibledomain}
is the subject of interest in host of practical applications, such as the optimization of stiffness and shape parameters \cite{mardsen-2004}, feedback control gains in mechanical systems \cite{bewley-2001} and manufacturing processes involving turbulent flows \cite{sethi-2005}, etc.

One interesting class of global optimization algorithms is the Direct (DIviding RECTangles) method which was developed in \cite{jones1993lipschitzian} for optimizing Lipschitz functions. These methods are extended in \cite{slivkins2011multi} to the case of any possible semi-metric by simultaneously considering the subspaces that can contain the optimum. The methods are deterministic optimization algorithms which was designed for problems with exact function evaluation. Later, these methods were modified to solve for objective functions obtained from noisy measurements in \cite{deng2007extension}, \cite{valko2013stochastic} and \cite{kleinberg2008multi}. 

A second relevant class of global optimization algorithms are branch and bound methods \cite{norkin1998branch}, which partition the search domain, then characterize the most promising partition in which to perform the next function evaluation.  These methods were initially designed for optimizing objective functions in which exact function evaluations were possible; however, the methods in \cite{rulliere2013exploring} modified it to address problems with noisy function evaluations.

Another class of methods are polynomial optimization algorithms \cite{lasserre2015introduction}  which globally solves an optimization algorithm using Lasserre type Moment-SOS relaxations. The methods in \cite{nie2019stochastic} extends these algorithms to address stochastic optimization problems. However, these methods are limited to the problems where $f(x)$ is a polynomial function of $x$.

Derivative-free optimization methods is discussed in \cite{audet2017derivative, sankaran2010method,conn2009introduction,conn2009global}, and indeed appear to be the most promising class of approaches for problems of the present form. These methods are implemented for shape optimization in airfoil design \cite{mardsen-2004}, as well as in online optimization \cite{kleinberg2008multi}. With such methods, only values of the function evaluations themselves are used, and neither a derivative nor its estimate is needed. The best methods of this class strive to keep function evaluations far apart in parameter space until convergence is approached, thereby mitigating somewhat the effect of uncertainty in the function evaluations.  This class of methods generally handles bound constraints quite well, and may be used to globally minimize the function of interest.  
Moreover, some advance algorithms \cite{audet2018mesh,audet2018progressive,amaioua2018efficient} in this class can handle problems with nonlinear constraints.
However, this class of method scales poorly with the dimension of the problem.  The surrogate management framework \cite{booker-1999, sankaran2010method, talgorn2015statistical} and Bayesian algorithms \cite{picheny-2013, Rasmussen-2006, Schonlau-1997, Snoek-2012, quan2013simulation} are amongst the best derivative-free methods available today, and are implemented for minimizing a problem of the form in \eqref{eq:ProblemStatement} in \cite{mardsen-2004, mardsen-20041, srinivas2012information, Talnikar-2014}. 
In this class, the method developed in \cite{Snoek-2012} develops a promising Bayesian approach, in a manner which increases the sampling of new measurements as convergence is approached.  However, this method does not selectively refine existing measurements, which is a key contributor to the efficiency of the algorithm developed herein.  

In this paper, a provably globally convergent (under the appropriate assumptions) new optimization approach is developed for problems of the form given in \eqref{eq:ProblemStatement}.  The structure of the remainder of the paper is as follows:  
Section \ref{sec:rew} briefly reviews the key features of the $\Delta$-DOGS($Z$) algorithm developed in \cite{beyhaghi_3}, upon which the present paper is built. 
Section \ref{sec:description} lays out all of the new elements that compose the new optimization approach, as well as the new algorithm itself, dubbed $\alpha$-DOGS.
Section \ref{sec:analyze} analyzes the convergence properties of the new algorithm, and establishes conditions which are sufficient to guaranty its convergence to the global minimum.
Section \ref{sec:result} applies the new algorithm to a selection of model problems in order to illustrate its behavior. 
Some conclusions are presented in Section \ref{sec:conclusion}.

\section{Delaunay-based optimization coordinated with a grid: $\Delta$-DOGS($Z$)} \label{sec:rew}
This section presents a simplified version of the $\Delta$-DOGS($Z$) algorithm, the full version of which is given as Algorithm 2 of \cite{beyhaghi_3}, where it is analyzed in detail. The $\Delta$-DOGS($Z$) algorithm is a grid-based acceleration of the $\Delta$-DOGS algorithm originally developed in \cite{beyhaghi_1}, and is designed to minimize problems in which precise function evaluations are available, while avoiding an accumulation of unnecessary function evaluations on the boundary of the feasible domain.

The optimization problem considered in this section is the minimization of an objective function $f(x)$, approximations of which are assumed to be available, in the feasible domain $L=\{x | x \in \mathbb{R}^n, a \le x \le b\}$. At each iteration of the simplified $\Delta$-DOGS($Z$) algorithm considered here, a metric based on an interpolation of existing function evaluations, and a model $e(x)$ of the ``remoteness'' of any point $x\in L$ from the available datapoints at that iteration (which in the $\Delta$-DOGS($Z$) algorithm characterizes the uncertainty of the interpolation) is minimized to obtain the location of the next point $x\in L$ at which the function will be evaluated. The interpolation and remoteness functions at iteration $k$ are denoted here by $p^k(x)$ and $e^k(x)$, the latter of which is defined below.  Note that the function $e^k(x)$ is called an ``uncertainty'' function in \cite{beyhaghi_1,beyhaghi_2,beyhaghi_3}; we use the name ``remoteness'' function for the same construction in the present paper, as it plays a slightly different role in the sections that follow.

\vskip0.05in
\begin{definition} 
\label{def:uncert}
Consider $S$ as a set of feasible points which includes the vertices of $L$, and
$\Delta$ as a Delaunay triangulation of $S$. 
Then, for each simplex $\Delta_i \in \Delta$, the local remoteness function is defined as:
\begin{subequations}
\label{eq:defe}
\begin{equation} \label{eq:defe;a}
e_i(x)=R_i^2-\norm{x-Z_i}^2, 
\end{equation} 
where $R_i$ and $Z_i$ are the circumradius and circumcenter of $\Delta_i$.
The global remoteness function $e(x)$ is a piecewise quadratic function which is nonnegative everywhere and goes to zero at the datapoints, and is defined as follows:
\begin{equation} \label{eq:defe;b}
e(x)=e_i(x) \quad \forall x \in \Delta_i.
\end{equation}  
\end{subequations}
\end{definition}

\noindent The remoteness function $e(x)$ has a number of properties which are established in Lemmas [2:5] in \cite{beyhaghi_1}, as listed bellow.
\begin{itemize}
\item[a.] The remoteness function $e(x) \ge 0$ for all $x\in L$, and $e(x)=0$ for all $x\in S$.
\item[b.] The remoteness function $e(x)$ is continuous and Lipschitz. 
\item[c.] The remoteness function $e(x)$ is piecewise quadratic with Hessian of $-2\,I$.
\item[d.] The remoteness function $e(x)$ is equal to the maximum of the local remoteness functions:
\begin{equation} \label{eq:emaxei}
e(x)=\max_{1 \leq i \leq N_s} e_i(x),
\end{equation} 
\end{itemize}
where $N_s$ is the number of simplices.

We now review the definition of the Cartesian grid over the feasible domain, as discussed further in \cite{beyhaghi_3}.

\vskip0.05in
\begin{definition} \label{def:cartgrid}
Taking $N_\ell=2^{\ell}$, the Cartesian grid of level $\ell$ over the feasible domain $L=\{x | a \leq x \leq b\}$, denoted $L_{\ell}$, is defined as follows: 
\begin{equation*}
L_{\ell}= \bigg\{ x | x_j= a_j +\frac{b_j-a_j}{N_{\ell}}\cdot z, \quad z \in \{1,2,\dots,N_\ell \}, \quad j \in \{1,2,\ldots,n\} \bigg\}. 
\end{equation*}
The {\it quantization} of a point $x$ onto the grid $L_\ell$, denoted $x_q^{\ell}$, is a point on the grid $L_\ell$ which has the minimum distance from $x$.  Note that this quantization process might have multiple solutions; any of these solutions is acceptable. 
The maximum quantization error of the grid, $\delta_{L_{\ell}}$, is defined as follows:
\begin{equation} \label{eq:quantization_error}
\delta_{L_{\ell}}= \max_{x \in {L_{\ell}}} \norm{x-x_q^{\ell}}=\frac{\norm{b-a}}{2N_\ell}. 
\end{equation} 
\end{definition} 

\begin{remark}\label{rem:cartgrid}
Three important properties of the Cartesian grid for the present purposes follow.
\begin{itemize}
\item[a.] The grid of level $\ell$ covering the feasible domain $L$ in an $n$ dimensional space has $(N_\ell+1)^n$ grid points.
\item[b.] $\lim_{\ell \rightarrow \infty} \delta_{L_{\ell}}=0$.
\item[c.] If $x^\ell_q$ is a quantization of $x$ onto $L_{\ell}$, then $A_a(x) \subseteq A_a(x^\ell_q)$, 
where $A_a(x)$ is the set of active bound constraints at $x$.
\end{itemize}
\end{remark}


\vskip0.05in
Algorithm \ref{algorithm:deltadogs} presents a strawman form of the $\Delta$-DOGS($Z$) algorithm.  A significant refinement of this algorithm is presented as Algorithm 2 of \cite{beyhaghi_3}, together with its proof of convergence and its implementation on model problems.  The key refinement in \cite{beyhaghi_3} of the strawman algorithm presented here is the identification of two different sets of points in $L$, dubbed $S_E$ (on which function evaluations are performed) and $S_U$ (on which function evaluations are not yet performed); the latter set proves useful in \cite{beyhaghi_3} to regularize the triangulation.  The algorithm proposed in \S \ref{sec:description} below effectively generalizes this notion, of evaluation points at which the function has been evaluated, and support points at which the function has not yet been evaluated, to the quantification and control over the {\it extent} of sampling performed for any given approximation of $f(x)$.

\begin{algorithm}[t!]
\caption{Strawman form of the grid-accelerated Delaunay-Based optimization algorithm, $\Delta$-DOGS($Z$), presented as Algorithm 2 in \cite{beyhaghi_3}, which assumes that precise function evaluations are available.}
\label{algorithm:deltadogs}
\begin{algorithmic}[1]
\State Set $k=0$ and initialize $\ell$. Take the set of {initialization points} $S^0$ as all $2^n$ vertices of the feasible domain $L$, together with any user-supplied points of interest (quantized onto the grid $L_0$), and perform function evaluations at all points in $S^0$.
\State Calculate (or, for $k>0$, update) an appropriate interpolating function $p^k(x)$ through all points in $S^k$.
\State Calculate (or, for $k>0$, update) a Delaunay triangulation $\Delta^k$ over all of the points in $S^k$.
\State Find $z$ as a global minimizer of $s_c^k(x)=p^k(x)- K\, e^k(x)$ in $L$, and take $z_\ell$ as its quantization onto the grid $L_{\ell}$.
\State {If} $z_\ell \notin S^k$, then  
take $S^{k+1}=S^k \cup \{z_\ell\}$
calculate $f(z_\ell)$, increment $k$, and repeat from 2.
\State Otherwise, take $S^{k+1}=S^k$, increment both $k$ and $\ell$, and repeat from 2.
\end{algorithmic}
\end{algorithm} 

\section{Delaunay-based optimization of a time-averaged value: $\alpha$-DOGS}
\label{sec:description}

This section presents the essential elements of the new optimization algorithm, dubbed $\alpha$-DOGS, which is designed to efficiently minimize a function $f(x)$ given by \eqref{eq:infinitecost} within the feasible domain $L$ defined by
\eqref{eq:feasibledomain}.
We begin by introducing some fundamental concepts.

\vskip0.05in
\begin{definition}
Take $S$ as a finite set of points $x_i$, for $i=1,\ldots,M$, at which the function $f(x)$ in \eqref{eq:infinitecost} has been approximated; drawing a parallel to the nomenclature commonly used in estimation theory, we refer to any such approximation of $f(x)$, developed with a finite number of samples $N_i$, as a {\it measurement}, denoted $y_i$:
\begin{equation}
 \label{eq:finitecost}
y_i=y(x_i,N_i)=\frac{1}{N_i} \sum_{k=1}^{N_i} g(x_i,k). 
\end{equation}
Any such measurement $y_i$ has a finite uncertainty associated with it, which can be reduced by increasing $N_i$.
\end{definition}

\vskip0.05in
\begin{remark}\label{rem:transient}
For many problems, there is an initial transient such that, for $k<\bar{k}$, the assumption of stationarity of $g(x,t_k)$ is not valid. 
For such problems, the initial transient in the data can be detected using the approach developed in \cite{awad-2006} and set aside, and the signal considered as stationary therafter. In such problems, to increase the speed of convergence of the statistics, the finite sum used for averaging the samples in \eqref{eq:finitecost} is modified to begin at $\bar{k}$ instead of beginning at 1.
\end{remark}
\vskip0.05in

Since, for $k>\bar{k}$, $g(x,k)$ is statistically stationary, each measurement $y_i$ is an \textit{unbiased estimate} of the corresponding value of $f(x_i)$.  We assume that a model for the \textit{standard deviation} quantifying the uncertainty of this measurement, denoted $\sigma_i=\sigma(x_i,N_i)$, is also available.
Since $g(x,t)$ is a stationary ergodic process, for any point $x_i \in L$,
\begin{equation}
\lim_{N_i \rightarrow \infty} y(x_i,N_i) = f(x_i), \quad
\lim_{N_i \rightarrow \infty} \sigma(x_i,N_i)=0.
\end{equation}
 
\begin{remark} \label{remark:IID}
If a stationary ergodic process $g(x,k)$ at some point $x_i \in L$ 
is {\it independent and identically distributed} (IID), then $\sigma(x_i,N_i)={\sigma(x_i,1)}/{\sqrt{N_i}}$;
otherwise, estimates of $\sigma(x_i,N_i)$ can be developed using standard uncertainty quantification (UQ) procedures, such as those developed in \cite{beran1994, beran1995, jenkins1957, oliver-2014, salesky-2012, beyhaghi2018uncertainty}.
The discrete-time process $g(x,k)$ may often be obtained by sampling a continuous-time process $g(x,t)$
at timesteps $t_k=k h$ for some appropriate sample interval $h$. For sufficiently large $h$, the samples of this continuous-time process $g(x,k)$ are often essentially IID; however, with the appropriate UQ procedures in place, significantly smaller sample intervals $h$ will lead to a given degree of convergence in a substantially shorter period of time $t$, albeit with increased storage.
\end{remark}
\vskip0.05in

\begin{definition} \label{def:strict}
Define $S$ as a set of measurements $y_i$, for $i = 1,2, \dots, M$, at corresponding points $x_i$ and with standard deviation $\sigma_i$. We will call a regression $p(x)$ for this set of measurements a \textit{strict} regression if, for some constant $\beta$,
\begin{equation}
\label{eq:strict}
\abs{p(x_i)-y_i} \le \beta\, \sigma_i, \quad \forall\ 1 \le i \le M.
\end{equation}
\end{definition}

Based on the concepts defined above, Algorithm \ref{algorithm:alphadogs} presents our algorithm to efficiently and globally minimize a function of the form \eqref{eq:infinitecost} within a feasible domain defined by \eqref{eq:feasibledomain}.
At each iteration $k$ of this algorithm, $S^k$ denotes the set of $M$ points $x_i$, for $i = 1,2, \dots, M$, at which measurements have so far been made; for each point $x_i \in S^k$, $y_i=y(x_i,N_i)$ denotes the measured value, $\sigma_i=\sigma(x_i,N_i)$ denotes the uncertainty of this estimate, and $N_i$ quantifies the sampling performed thus far at point $x_i$.  Note that the values of $M$, $N_i$, $y_i$, $\ell$, $\alpha$, and $K$ are all updated from time to time as the iterations proceed, and are thus annotated with a $k$ superscript at various points in the analysis of \S \ref{sec:analyze} to remove ambiguity.  Akin to Algorithm \ref{algorithm:deltadogs}, at iteration $k$, $p^k(x)$ is assumed to be a strict regression (for some value of $\beta$) of the current set of measurements $y_i$, and $\ell$ is the current grid level.

\begin{algorithm}[t!]
\caption{The new optimization algorithm, dubbed $\alpha$-DOGS, for minimizing the function $f(x)$ in \eqref{eq:infinitecost} within the feasible domain $L$ defined in \eqref{eq:feasibledomain}.}
\label{algorithm:alphadogs}
\begin{algorithmic}[1]    
\State Set $k=0$ and initialize the algorithm parameters $\alpha$, $K$, $\gamma$, $\beta$, $\ell$, $N^0$, and $N^\delta$ as discussed in \S \ref{sec:description}.  Take the initial set of $M$ sampled points, $S^0$, as the $2^n$ vertices of the feasible domain $L=\{x| a\leq x\leq b\}$ together with any user-supplied points of interest quantized onto the grid $L_\ell$.  Take $N_i=N^0$ for $i=1,\ldots,M$, and compute an initial measurement $y_i=y(x_i,N^0)$ and corresponding uncertainty $\sigma_i=\sigma(x_i,N^0)$ for each point $x_i \in S^0$.

\State Calculate a strict regression $p^k(x)$ for all $M$ available measurements.

\State Calculate (or, for $k>0$, update) a Delaunay triangulation $\Delta^k$ over all of the points in $S^k$.

\State Determine $x_j$ as the minimizer (and, $j$ as the corresponding index), over all $x_i\in S^k$, of the discrete search function $s_d^k(x_i)$, defined as follows:
                \begin{equation} \label{eq:dis_search}
                        s_d^k(x_i)=  \min\{p^k(x_i), 2\, y_i - p^k(x_i)\} - \alpha^k\,\sigma_i^k \quad
                        \textrm{for}\ i=1,\ldots,M.
                \end{equation}

\State Noting the definition of $e^k(x)$ in \eqref{eq:defe}, determine $z$ as the minimizer, over all $x\in L$, of the continuous search function $s_c^k(x)$, defined as follows:
           \begin{equation} \label{eq:continuous_search}
                        s_c^k(x)=  p^k(x) - K^k\,e^k(x) \quad
                        \textrm{for}\ x\in L.
                \end{equation}
Denote $z_\ell$ as the quantization of $z$ onto the grid $L_\ell$. 

\State If
$s_c^k(z) > s_d^k(x_j)$ and
$N_j < \gamma\,2^{\ell}$ where $N_j$ is the current number of samples taken at $x_j$, {then} take $N^\delta$ additional samples at $x_j$,
update $N_j \leftarrow N_j+N^\delta$,
update the measurement $y_j=y(x_j,N_j)$ and uncertainty $\sigma_j=\sigma(x_j,N_j)$,
increment $k$, and repeat from 2.

\State Otherwise, if $z_\ell \notin S^k$, {then} set
$x_{M+1}=z_\ell$ and
$S^{k+1}=S^k \cup \{x_{M+1}\}$, take $N_{M+1}=N^0$, compute the measurement $y_{M+1}=y(x_{M+1},N^0)$ and uncertainty $\sigma_{M+1}=\sigma(x_{M+1},N^0)$, increment $M$ and $k$, and repeat from 2.

\State Otherwise (i.e., if $z_\ell \in S^k$), adjust the algorithm parameters such that $\alpha^{k+1} \leftarrow \alpha^k+\alpha^\delta$ and $K^{k+1} \leftarrow 2\,K^{k}$, increment both $\ell$ and $k$, and repeat from 2.
\end{algorithmic}
\end{algorithm} 

At each iteration of Algorithm  \ref{algorithm:alphadogs}, there are three possible situations, corresponding to three of the numbered iterations of this algorithm:
\begin{itemize}
\item[(6)] The sampling of an existing measurement is increased.  This is called a \textit{supplemental sampling} iteration.
\item[(7)] A new point is identified, and an initial measurement at this point is added to the dataset.  This is called an \textit{identifying sampling} iteration. 
\item[(8)] The mesh coordinating the problem is refined and the algorithm parameters $\alpha$ and $K$ adjusted.  This is called a \textit{grid refinement} iteration.
\end{itemize}
Figure \ref{fig:alpha_illuster} illustrates supplemental sampling and identifying sampling iterations of Algorithm \ref{algorithm:alphadogs}.

 \begin{figure*}
\centerline{
\begin{subfigure}{.5\textwidth}
  \centering
  \includegraphics[width=1.0\columnwidth]{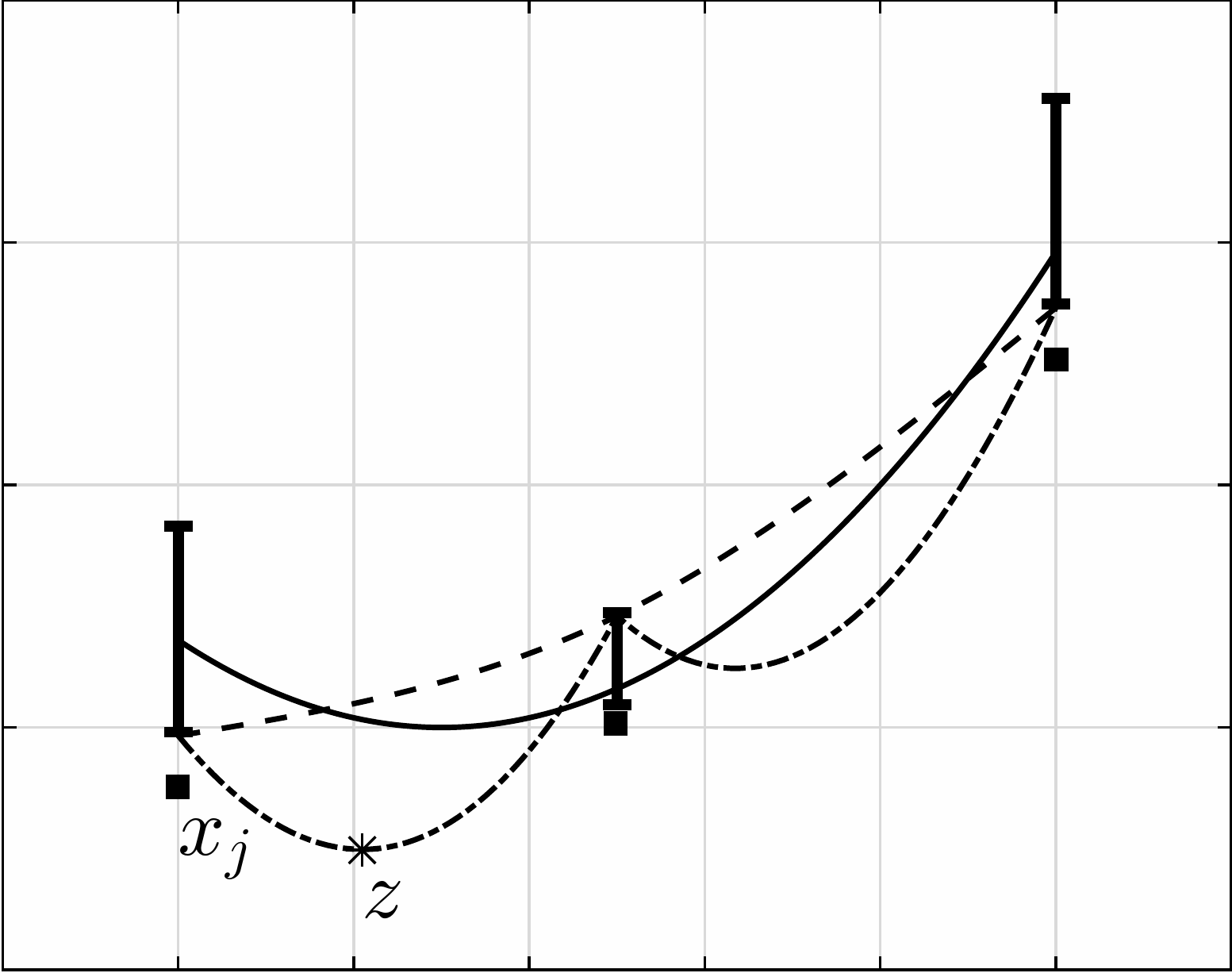} \label{fig:alphaexplore}
  \caption{An identifying sampling iteration}
\end{subfigure} 
\begin{subfigure}{.5\textwidth}
  \centering
  \includegraphics[width=1.0\columnwidth]{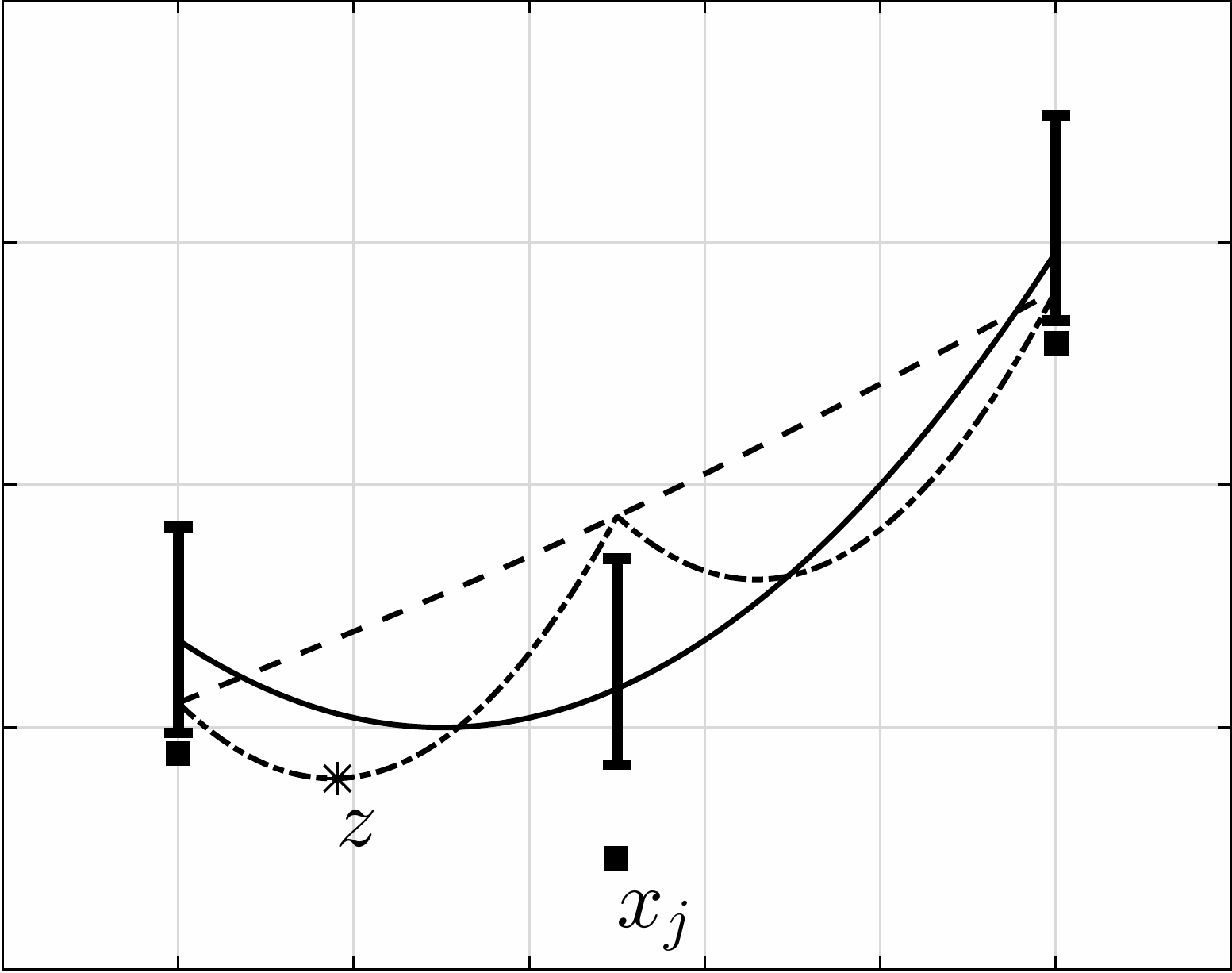} \label{fig:alphaimprove}
  \caption{A supplemental sampling iteration.}
\end{subfigure}
}
\caption{Representation of one iteration in Algorithm \ref{algorithm:alphadogs} in different situations:
(solid line) truth function $f(x)$,
(dashed line) the regression $p^k(x)$,
(dash-dot line) continuous search function $s_c^k(x)$,
(closed squares) $s_d^k(x)$, and
(asterix) $z$.
In figure (a), $s_c^k(z)<s_d^k(x_j)$; it is thus an identifying sampling iteration.
In figure (b), $s_c^k(z)>s_d^k(x_j)$; it is thus an supplemental sampling iteration. Horizontal axis is the $x$ coordinate and the vertical axis is the function value $f(x)$. 
} \label{fig:alpha_illuster}
\end{figure*}

Algorithm \ref{algorithm:alphadogs} depends upon a handful of algorithm parameters, the selection of which affects its rate of convergence, explored in \S \ref{sec:result}, though not its proof of convergence, established in \S \ref{sec:analyze}.  The remainder of this section discusses heuristic strategies to tune these algorithm parameters, noting that this tuning is an application-specific problem, and alternative strategies (based on experiment or intuition) might lead to more rapid convergence for certain problems. 

The first task encountered during the setup of the optimization problem is the definition of the design parameters. Note that the feasible domain considered during the optimization process is characterized by simple upper and lower bounds for each design parameter; normalizing all design parameters to lie between 0 and 1 is often helpful.

The second challenge is to scale the function $f(x)$ itself, such that the range of the normalized function $f(x)$ over the feasible domain $L$ is about unity. If an estimate of the actual range of $f(x)$ is not available a priori, we may estimate it at any given iteration using the available measurements.  Following this approach, at any iteration $k$ with available measurements $\{y_1,y_2,\dots,y_M\}$, all measurements $y_i$, as well as the corresponding uncertainty of these measurements $\sigma_i$, may be scaled by a factor $r_s$ wherever used in iteration $k$ of Algorithm \ref{algorithm:alphadogs} where, for that iteration, $r_s$ is computed such that
\begin{gather*}
r=\frac{1}{\max_{1\le i \le M} \{y_i\}-\min_{1\le i \le M} \{y_i\}}, \quad  r_s =r_l + R(r-r_l) - R(r-r_u), 
\end{gather*}
where $R(x)$ is the ramp function. So defined, $r_s$ is a ``saturated'' version of the factor $r$, constrained to lie in the range $r_l\le r_s \le r_u$.  Note that scaling the $y_i$ and $\sigma_i$ does not interfere with the proof of the convergence of the algorithm, provided in \S \ref{sec:analyze}, but can improve its performance.  In the numerical simulations performed in \S \ref{sec:result}, we take $r_l=10^{-3}$ and $r_u=10^3$.

For problems which are IID with no initial transient, $N^0=N^\delta=1$ is a reasonable starting point; increasing $N^0$ and $N^\delta$ ultimately reduces the number of iterations of the algorithm (and, thus, the number of Delaunay triangulations) required for convergence, but generally increases slightly the total amount of sampling performed.
Suggested values of other algorithm parameters, which work well in the numerical simulations reported in \S \ref{sec:result} but the values of which do not affect the proof of convergence provided in \S \ref{sec:analyze}, include $\alpha^0=\alpha^\delta=0.5$, $K^0=0.5$, $\ell^0=3$, $\beta=4$, and $\gamma=100$.

\section{Analysis of $\alpha$-DOGS}
\label{sec:analyze}

We now analyze the convergence of Algorithm \ref{algorithm:alphadogs}.  We first present some preliminary definitions.

\vskip0.05in
\begin{definition} \label{def:candidatepoint}  
The point $\eta^k \in S$ is called the \textit{candidate} point at iteration $k$ if
\begin{equation}
\eta^k \in \text{argmin}_{z \in S^k} \{y_k(z) + \alpha^k \sigma_k(z)\}.
\end{equation}
Define $f(x^*)$ as the global minimum of $f(x)$ in $L$, then the \textit{regret} is defined as 
\begin{equation} \label{eq:reget}
r_k=f(v_k)-f(x^*). 
\end{equation}
\end{definition}

The definition of the regret given above is common in the optimization literature (see, e.g., \cite{bubeck2011x, kleinberg2008multi, srinivas2012information}).
We show in this section that, under the following assumptions, the regret of the optimization process governed by Algorithm \ref{algorithm:alphadogs} will converge to zero:

\vskip0.05in
\begin{assumption}\label{assumption.1}
A constant $\hat{K}$ exist such that, for all $k>0$ and $x \in L$,
\begin{equation*}
-2\, \hat{K} I\preccurlyeq  \nabla^2 p^k(x) \preccurlyeq  2\, \hat{K} I, \quad -2\, \hat{K} I \preccurlyeq  \nabla^2 f(x) \preccurlyeq 2\, \hat{K}I,
\end{equation*}
\end{assumption}
where $I$ is the identity matrix.

\begin{assumption}\label{assumption.2} 
There is a real continuous and monotonically increasing function $E:\mathbb{R}^{+} \rightarrow [0,Q]$,
which has the following properties:
\begin{itemize}
\item[a.] $E(0) = 0$ and $\lim_{r \rightarrow 0^+} E(r)= 0$ and $\lim_{r \rightarrow \infty} E(r) = Q$.
\item[b.] For all $x\in L$ and $N \in \mathbb{N}$, we have: 
\begin{equation} \label{eq:Tvarp1}
\Big|
\frac{1}{N} \sum_{k=1}^N g(x,k) - f(x)
\Big|
= \abs{y(x,N)-f(x)} \le E(\sigma(x,N)).
\end{equation}
\end{itemize}
\end{assumption}
 
\vskip0.05in
\begin{assumption}\label{assumption.3}
There are real numbers $\alpha>0$ and $\theta \in (0 \, 1]$, such that
\begin{equation}\label{eq:alphabeta}
\sigma(x,N) \le \alpha N^{-\theta} \quad \forall x\in \mathbb{R}, \, N \in \mathbb{N}.
\end{equation}
\end{assumption}

Note that, if the stationary process $g(x,k)$ has a short-range dependence, like ARMA processes, the parameter $\theta=0.5$.  However, for different $\theta$, this general model can also handle stationary processes with long-range dependence, like Fractional ARMA processes. Moreover, at any given point $x \in L$, $\sigma(x,N)$ is a monotonically nonincreasing function of $N$. This condition means that, by increasing the averaging interval, the uncertainty of the estimate must not increase.  

\vskip0.05in
\begin{remark}
Assumption 2 is a stronger condition than ergodicity of $g(x,k)$. Recall that ergodicity of $g(x,k)$ is equivalent to the convergence of $y(x,N)$ to $f(x)$ as $N\rightarrow\infty$, which is the straightforward outcome of Assumptions \ref{assumption.2} and \ref{assumption.3}. 
In Assumption \ref{assumption.2}, the convergence of the sample mean is assumed to be bounded by a function of $\sigma(x,N)$ of the form specified.
\end{remark}

\vskip0.05in
\begin{lemma} \label{lem:epsT1}
For any point $x\in L$ and real positive number $0<\varepsilon<Q$,
\begin{equation} \label{eq:T1}
\abs{y(x,N)-f(x)} - \frac{Q}{E^{-1}(\varepsilon)} \, \sigma(x,N) \le \varepsilon, 
\end{equation} 
\end{lemma}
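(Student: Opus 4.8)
The plan is to reduce \eqref{eq:T1} to a one-variable inequality about the function $E$ supplied by Assumption \ref{assumption.2}, and then settle that inequality by a short two-case argument according to the size of $\sigma(x,N)$ relative to $E^{-1}(\varepsilon)$.

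First I would check that the correction coefficient $Q/E^{-1}(\varepsilon)$ is legitimate. Since $E$ is continuous and monotonically increasing on $\mathbb{R}^{+}$ with $E(0)=0$ and $\lim_{r\to\infty}E(r)=Q$ (Assumption \ref{assumption.2}a), the intermediate value theorem provides, for every $\varepsilon\in(0,Q)$, a point $E^{-1}(\varepsilon)\in(0,\infty)$ with $E\bigl(E^{-1}(\varepsilon)\bigr)=\varepsilon$; it is strictly positive because $E(0)=0<\varepsilon$, and $Q>0$ because $0<\varepsilon<Q$, so $Q/E^{-1}(\varepsilon)$ is finite and positive. By Assumption \ref{assumption.2}b we have $\abs{y(x,N)-f(x)}\le E(\sigma(x,N))$, so \eqref{eq:T1} follows once I establish
\[
E(s)-\frac{Q}{E^{-1}(\varepsilon)}\,s\le\varepsilon \qquad \text{for all } s\ge 0,
\]
applied at $s=\sigma(x,N)\ge 0$ (the uncertainty is nonnegative by definition).

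Next I would split on $s$. If $s\le E^{-1}(\varepsilon)$, then monotonicity of $E$ gives $E(s)\le E\bigl(E^{-1}(\varepsilon)\bigr)=\varepsilon$, and since $\frac{Q}{E^{-1}(\varepsilon)}\,s\ge 0$ the displayed bound holds. If instead $s> E^{-1}(\varepsilon)$, then $\frac{Q}{E^{-1}(\varepsilon)}\,s> Q\ge E(s)$, the last inequality because the range of $E$ is contained in $[0,Q]$; hence $E(s)-\frac{Q}{E^{-1}(\varepsilon)}\,s<0\le\varepsilon$. In either case the inequality holds, which proves the lemma.

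I do not expect a genuine obstacle here; the only step requiring any care is verifying that $E^{-1}(\varepsilon)$ exists and is strictly positive, which is precisely where the hypotheses $E(0)=0$, monotonicity, $\lim_{r\to\infty}E(r)=Q$, and $0<\varepsilon<Q$ are used. Geometrically, the statement is merely that the line $s\mapsto\varepsilon+\frac{Q}{E^{-1}(\varepsilon)}\,s$, which equals $\varepsilon$ at $s=0$ and $\varepsilon+Q$ at $s=E^{-1}(\varepsilon)$, stays above the graph of the bounded increasing function $E$ (which never exceeds $Q$) on all of $\mathbb{R}^{+}$.
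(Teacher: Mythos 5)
Your proof is correct and follows essentially the same route as the paper's: apply Assumption \ref{assumption.2}b and split into the two cases $\sigma(x,N)\le E^{-1}(\varepsilon)$ and $\sigma(x,N)>E^{-1}(\varepsilon)$, using monotonicity of $E$ in the first case and the bound $E\le Q$ in the second. Your additional verification that $E^{-1}(\varepsilon)$ exists and is strictly positive is a point of care the paper leaves implicit, but it does not change the argument.
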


\begin{proof} 
If $\sigma(x,N)\le E^{-1}(\varepsilon)$ then, since $E(x)$ is an increasing function, by \eqref{eq:Tvarp1}, we have:
\begin{equation*}
E(\sigma(x,N)) \le E(E^{-1}(\varepsilon))= \varepsilon \quad \Rightarrow \quad
\abs{y(x,N)-f(x)}  \leq \varepsilon.
\end{equation*} 
Otherwise, $\sigma(x,N) >E^{-1} (\varepsilon)$; thus, again by \eqref{eq:Tvarp1}, we have
\begin{gather*}
\frac{Q}{E^{-1}(\varepsilon)} \, \sigma(x,N) \geq Q \quad \Rightarrow \quad \abs{y(x,N)-f(x)}
- Q
\le E(\sigma_N)-Q 
\le 0,
\end{gather*} 
Thus, \eqref{eq:T1} is verified for both cases. 
\end{proof}

\begin{lemma}\label{lem:infinite_mesh_decrease}
During the execution of Algorithm \ref{algorithm:alphadogs}, 
there are an infinite number of mesh refinement iterations.
\end{lemma}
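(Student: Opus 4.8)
The plan is to argue by contradiction: suppose only finitely many grid refinement (type-(8)) iterations occur. Then after some iteration $k_0$ the grid level $\ell$ is fixed, say at $\ell = \bar\ell$, and the parameters $\alpha^k$ and $K^k$ are likewise frozen at values $\bar\alpha$ and $\bar K$. For $k > k_0$, every iteration is therefore either a supplemental sampling iteration (type (6)) or an identifying sampling iteration (type (7)). First I would note that type-(7) iterations can occur only finitely often after $k_0$: each one adds a new \emph{grid point} $z_\ell \in L_{\bar\ell} \setminus S^k$, and by Remark~\ref{rem:cartgrid}(a) the grid $L_{\bar\ell}$ has only $(N_{\bar\ell}+1)^n$ points, so once all of them have been evaluated no further type-(7) iteration is possible. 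Hence there is an iteration $k_1 \ge k_0$ after which \emph{every} iteration is a supplemental sampling iteration.

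The contradiction then comes from the guard in step~(6): a supplemental sampling iteration at point $x_j$ requires $N_j < \gamma\, 2^{\bar\ell}$. After $k_1$, the set $S^k$ of evaluated points is fixed, say $S = \{x_1,\dots,x_{\bar M}\}$, and each supplemental iteration increases some $N_j$ by $N^\delta \ge 1$. Since each $N_j$ is bounded above by $\gamma\, 2^{\bar\ell}$, the total number of supplemental iterations after $k_1$ is at most $\bar M \cdot \gamma\, 2^{\bar\ell} / N^\delta$, which is finite. But that means only finitely many iterations occur in total after $k_1$, whereas the algorithm runs indefinitely — unless at some iteration \emph{none} of the three branches (6),(7),(8) is taken. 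So I must check that at least one branch always fires: if step~(6)'s condition fails (either $s_c^k(z) \le s_d^k(x_j)$ or $N_j \ge \gamma\,2^{\ell}$), the algorithm proceeds to step~(7), and if $z_\ell \in S^k$ there it falls through to step~(8), which is a grid refinement — contradicting that no type-(8) iteration occurs after $k_0$. If instead $z_\ell \notin S^k$ we are in a type-(7) iteration, already excluded after $k_1$. Tracing these cases shows every iteration after $k_1$ must be type (6), and we have just bounded those; contradiction.

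I expect the main obstacle to be the bookkeeping of the case analysis in step~(6) versus steps~(7)–(8): one has to be careful that the failure of the step-(6) test genuinely forces either a type-(7) or a type-(8) iteration, and in particular that after $k_1$ (when $S^k$ is fixed and exhausts the grid) the only remaining possibility, namely $z_\ell \in S^k$ with the step-(6) test failing, is exactly a type-(8) iteration — contradicting the standing assumption. The quantitative pieces (finiteness of the grid, the $N_j < \gamma\,2^{\bar\ell}$ cap, the increment $N^\delta \ge 1$) are routine; the subtlety is purely in confirming that the algorithm cannot ``stall'' without taking one of the three branches, so that finiteness of types (6) and (7) after $k_1$ genuinely forces a fresh type-(8) iteration.
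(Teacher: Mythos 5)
Your proposal is correct and follows essentially the same route as the paper's proof: argue by contradiction, use the finiteness of the grid $L_{\bar\ell}$ to bound the number of identifying sampling iterations, and use the guard $N_j < \gamma\,2^{\bar\ell}$ together with the increment $N^\delta \ge 1$ to bound the number of supplemental sampling iterations, forcing a further refinement. Your version is somewhat more explicit about the case analysis showing the algorithm cannot stall, but the underlying argument is the same.
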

\begin{proof}
This lemma is shown by contradiction. 
If Algorithm \ref{algorithm:alphadogs} has a finite number of mesh refinement iterations,
then there is an integer number $\bar\ell$ such that the mesh $L_{\bar{\ell}}$ contains all datapoints obtained by the algorithm.
Since the number of datapoints on this mesh is finite, only a finite number of points must be considered, which leads to having a finite number of identifying sampling iterations.

Since the number of identifying sampling and mesh refinement iterations are finite, there must be an infinite number of supplemental sampling iterations. 
At each supplemental sampling iteration, the averaging length of the estimate at an existing datapoint is incremented by $N^\delta\ge 1$.
Since only a finite number of points is considered, a datapoint exists for which the estimate is improved for an infinite number of supplemental sampling iterations.
As a result, there is an supplemental sampling iteration, such that $N_j>\gamma 2^{\bar\ell}$, which is in contradiction with the assumption of having finite number of mesh refinement iterations.         
\end{proof}

Note that the following short Lemma and proof, which are necessary for this development, are copied directly from \cite{beyhaghi_3}.

\vskip0.05in
\begin{lemma} \label{lem:Gactin}
Consider $G(x)$ as a twice differentiable function such that $\nabla^2 G(x)- 2\, K_1 I \preccurlyeq 0$, 
and $x^* \in L$ as a local minimizer of $G(x)$ in $L$. Then, for each $x \in L$ such that $A_a(x^*) \subseteq A_a(x)$, we have:
\begin{equation} \label{eq:Gxx^*K1}
G(x)-G(x^*) \leq K_1 \norm{x-x^*}^2. 
\end{equation}
\end{lemma}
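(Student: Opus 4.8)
The plan is to localize the problem to the affine subspace defined by the active bound constraints at $x^*$, where the minimizer $x^*$ becomes an interior (or unconstrained-in-that-subspace) stationary point, and then apply a second-order Taylor expansion with the curvature bound. First I would let $A = A_a(x^*)$ denote the active set at $x^*$, and consider the affine subspace $V = \{ x \in \mathbb{R}^n : x_i = x^*_i \ \forall i \in A \}$. Since $x^*$ is a local minimizer of $G$ over $L$ and the only constraints that can be active at $x^*$ are those in $A$, the first-order optimality conditions give that the projected gradient vanishes: for every direction $d$ with $d_i = 0$ for $i \in A$, we have $\nabla G(x^*)^\top d = 0$. (One must check the sign carefully: for indices in $A$ the constraint may be a lower or upper bound, so the gradient component there has a definite sign, but those components are irrelevant because the points $x$ we compare against also satisfy $A_a(x^*) \subseteq A_a(x)$, hence $x_i = x^*_i$ for $i \in A$, so $x - x^*$ lies in the linear subspace parallel to $V$.)

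Next I would write the exact second-order Taylor expansion of $G$ along the segment from $x^*$ to $x$: there exists $\xi$ on that segment with
\begin{equation*}
G(x) - G(x^*) = \nabla G(x^*)^\top (x - x^*) + \tfrac{1}{2} (x-x^*)^\top \nabla^2 G(\xi) (x-x^*).
\end{equation*}
The first term vanishes by the previous paragraph since $x - x^*$ is a feasible direction at $x^*$ (its components on the active set are zero). For the second term, the hypothesis $\nabla^2 G(\xi) - 2K_1 I \preccurlyeq 0$ gives $(x-x^*)^\top \nabla^2 G(\xi)(x-x^*) \le 2K_1 \norm{x-x^*}^2$, and dividing by $2$ yields exactly \eqref{eq:Gxx^*K1}.

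The only delicate point — and the one I would be most careful about — is the justification that the first-order term $\nabla G(x^*)^\top(x - x^*)$ is nonpositive (it actually equals zero here, but $\le 0$ suffices). This requires knowing that each index $i$ where $x_i \neq x^*_i$ is \emph{not} active at $x^*$, which is precisely what $A_a(x^*) \subseteq A_a(x)$ delivers: if $i \notin A_a(x)$ then $a_i < x_i < b_i$, and combined with the fact that $x^*$ is a minimizer over $L$, any component of $\nabla G(x^*)$ corresponding to an inactive constraint at $x^*$ must be zero. Everything else is a routine Taylor argument; no compactness or continuity beyond twice-differentiability is needed since we only expand along a single line segment inside the convex set $L$.
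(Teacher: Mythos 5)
Your proof is correct and follows essentially the same route as the paper's: the decisive step in both is that $A_a(x^*)\subseteq A_a(x)$ together with the first-order optimality conditions at $x^*$ forces $\nabla G(x^*)^T(x-x^*)=0$, after which the Hessian bound gives the quadratic term. The only cosmetic difference is that the paper obtains the second-order estimate from the concavity of $G(x)-K_1\norm{x-x^*}^2$ (tangent-plane inequality) rather than from Taylor's theorem with Lagrange remainder, which are interchangeable here.
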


\begin{proof}
Define function $G_1(x)= G(x)-K_1 \, \norm{x-x^*}^2$. By construction, $G_1(x)$ is concave; therefore,
\begin{gather*}
G_1(x) \le G_1(x^*)+ \nabla G_1(x^*)^T (x-x^*), \\
G_1(x^*)=G(x^*), \quad  \nabla G_1(x^*)= \nabla G(x^*), \\
G(x) \le G(x^*)+ \nabla G(x^*)^T (x-x^*)+ K_1 \, \norm{x-x^*}^2. 
\end{gather*} 
Since the feasible domain is a bounded domain, the constrained qualification holds; therefore, $x^*$ is a KKT point.
Therefore, using $A_a(x^*) \subseteq A_a(x)$ leads to $ \nabla G(x^*)^T (x-x^*)=0$, which verifies \eqref{eq:Gxx^*K1}.
\end{proof}


\begin{lemma}
\label{lem:MKlimsup}
Consider $z$, $x_j$, and $x^*$ as global minimizers of $s_c^k(x)$, $s_d^k(x)$, and $f(x)$, respectively.
Note that $s_d^k(x)$ is only defined for the points in $S^k$, but $s_c^k(x)$ and $f(x)$ are defined over the feasible domain $L$. 
Define $M_k$ as:
\begin{equation} \label{eq:Mk}
M_k=\min \{s_c^k(z)-f(x^*), s_d^k(x_j)-f(x^*)\}.
\end{equation}
Then,
\begin{equation} \label{eq:MK}
\limsup_{k \rightarrow \infty} M_k \leq 0.
\end{equation}
\end{lemma}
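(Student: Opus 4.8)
The plan is to argue by contradiction, exploiting Lemma~\ref{lem:infinite_mesh_decrease}, which guarantees infinitely many grid refinement iterations. Suppose $\limsup_{k\to\infty} M_k = c > 0$. Then there is a subsequence of iterations along which $M_k > c/2$, i.e.\ both $s_c^k(z) - f(x^*) > c/2$ and $s_d^k(x_j) - f(x^*) > c/2$. I would like to focus attention on a grid refinement iteration $k$ (one of the infinitely many) and extract a contradiction at that iteration. At a grid refinement iteration, by the logic of step~8 of Algorithm~\ref{algorithm:alphadogs}, we have $z_\ell \in S^k$ and the supplemental-sampling test failed, so either $s_c^k(z) \le s_d^k(x_j)$ or $N_j \ge \gamma\,2^\ell$. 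In either case one gets control on the relevant search function at the quantized minimizer.

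The key technical estimate is to bound $s_c^k(z)$ from above by something that tends to $f(x^*)$ as $\ell \to \infty$. Let $x^\ell_q$ be the quantization of $x^*$ onto $L_\ell$. Since $x^\ell_q$ is on the grid and $z_\ell \in S^k$ at a grid refinement iteration, but $z$ globally minimizes $s_c^k$ over $L$, we have $s_c^k(z) \le s_c^k(x^\ell_q) = p^k(x^\ell_q) - K^k e^k(x^\ell_q)$. Now $e^k \ge 0$ (property (a) of the remoteness function), so $s_c^k(z) \le p^k(x^\ell_q)$. Because $p^k$ is a strict regression, $p^k(x^\ell_q)$ differs from $y(x^\ell_q, N)$ by at most $\beta\,\sigma$, and $y$ differs from $f(x^\ell_q)$ by at most $E(\sigma)$ by Assumption~\ref{assumption.2}; and $f(x^\ell_q)$ differs from $f(x^*) = f(\text{true minimizer})$ by at most $\hat K \|x^\ell_q - x^*\|^2$ via Lemma~\ref{lem:Gactin} applied to $G = f$ with $K_1 = \hat K$ (the active-constraint condition $A_a(x^*)\subseteq A_a(x^\ell_q)$ is exactly property (c) of Remark~\ref{rem:cartgrid}). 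The quantization error $\|x^\ell_q - x^*\| \le \delta_{L_\ell} = \|b-a\|/(2N_\ell) \to 0$ as $\ell\to\infty$. Combining: along the subsequence of grid refinement iterations, $s_c^k(z) - f(x^*) \le \beta\,\sigma + E(\sigma) + \hat K\,\delta_{L_\ell}^2$, where $\sigma = \sigma(x^\ell_q, N)$ with $N$ the current sample count at $x^\ell_q$.

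The main obstacle is controlling $\sigma(x^\ell_q, N)$: it is not obviously small, since $x^\ell_q$ might be a freshly added point with only $N^0$ samples. Here I would use the branch structure: if at the grid refinement iteration $s_c^k(z) \le s_d^k(x_j)$, then $M_k \le s_c^k(z) - f(x^*)$ and also $s_d^k(x_j) \ge s_c^k(z)$, but $s_d^k$ involves a $-\alpha^k\sigma$ term with $\alpha^k \to \infty$ (it grows by $\alpha^\delta$ at every grid refinement), which forces the candidate point's uncertainty to be small or its regression value to be near $f(x^*)$; alternatively if $N_j \ge \gamma\,2^\ell$, then by Assumption~\ref{assumption.3} $\sigma_j \le \alpha(\gamma 2^\ell)^{-\theta} \to 0$. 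The careful bookkeeping is to show that in all branches, along the grid-refinement subsequence, $M_k$ is bounded above by a quantity of the form $(\text{small as }\ell\to\infty) + (\text{contribution that would force }M_k\le 0)$. Assembling these cases so that $M_k > c/2$ cannot persist — in particular ruling out the possibility that $z$ keeps landing on under-sampled new points while $\alpha^k\to\infty$ keeps $s_d^k$ artificially low — is the delicate part; it likely needs an auxiliary observation that at a grid refinement iteration the relation $z_\ell \in S^k$ together with $e^k(z_\ell) = 0$ ties $s_c^k(z)$ to $p^k(z_\ell)$, hence to an actual measurement, closing the loop.
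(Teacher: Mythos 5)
Your proposal has two genuine gaps. First, the contradiction framework does not match the claim. You suppose $\limsup_{k\to\infty}M_k=c>0$, extract a subsequence with $M_k>c/2$, and then try to derive a contradiction \emph{at grid refinement iterations}. But the subsequence realizing the limsup need not contain any grid refinement iterations: it could consist entirely of supplemental or identifying sampling iterations, at which none of your branch analysis (step 8 of the algorithm, $z_\ell\in S^k$, $N_j\ge\gamma 2^\ell$) applies. Establishing $M_{k_i}\le 0$ asymptotically along the refinement subsequence would prove a statement like Lemma~\ref{lem:meshdecreasing}, not Lemma~\ref{lem:MKlimsup}, which requires the bound along the \emph{full} sequence. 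Second, your key estimate applies the strict-regression bound and Assumption~\ref{assumption.2} at $x^\ell_q$, the quantization of $x^*$ onto $L_\ell$. That point is generally \emph{not} in $S^k$: the algorithm has no measurement $y(x^\ell_q,N)$ or uncertainty $\sigma(x^\ell_q,N)$ there, so $\abs{p^k(x^\ell_q)-y(x^\ell_q,N)}\le\beta\sigma$ is not available, and the chain $s_c^k(z)\le p^k(x^\ell_q)\le f(x^\ell_q)+E(\sigma)+\beta\sigma$ collapses at its first link after the quantization. Even where such bounds would apply, you correctly flag that $\sigma$ could be $O(1)$ at a freshly added point, and the ``delicate part'' you defer is precisely the part that needs a mechanism.

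The paper's proof supplies that mechanism and avoids both problems. It works at \emph{every} iteration $k>k_\varepsilon$ (needing only $K^k\ge 3\hat K$ and $\alpha^k\ge 2Q/E^{-1}(\varepsilon)$, which hold eventually because refinements are infinite), with no case split on iteration type and no quantization of $x^*$. The device is an auxiliary linear function $M(x)$ on the Delaunay simplex containing $x^*$, interpolating $2f-p^k$ at its vertices, together with $G(x)=s_c^k(x)+M(x)-2f(x)$: convexity of $G$ (from Assumption~\ref{assumption.1} and $K^k\ge 3\hat K$) and $G=0$ at the vertices give $s_c^k(z)\le s_c^k(x^*)\le 2f(x^*)-M(x^*)$, while linearity of $M$ plus Lemma~\ref{lem:epsT1} (which uses the $-\alpha^k\sigma$ term in $s_d^k$ to absorb the measurement error $2(y-f)$, precisely the term you could not control) gives $s_d^k(x_j)\le M(x^*)+2\varepsilon$. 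Adding these, $M(x^*)$ cancels and $s_c^k(z)+s_d^k(x_j)\le 2f(x^*)+2\varepsilon$, so the minimum of the two differences is at most $\varepsilon$. Transferring information between the continuous and discrete search functions through $M(x)$ is the idea your sketch is missing; I recommend studying that construction, since it recurs in the convergence analysis.
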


\begin{proof}
By Lemma \ref{lem:infinite_mesh_decrease},
there are infinite number of mesh refinement iterations during the execution of Algorithm \ref{algorithm:alphadogs}.  Thus,
\begin{equation}
\lim_{k \rightarrow \infty} K^k = \infty, \quad  \lim_{k \rightarrow \infty} \alpha^k = \infty.
\end{equation}
As a result, for any $0< \varepsilon< Q$, there is a ${k}_{\varepsilon}$ such that, if $k>{k}_{\varepsilon}$, then
\begin{equation} \label{eq:hatkk}
K^k \geq 3\, \hat{K} \quad \text{and} \quad \alpha^k \geq \frac{2\, Q }{E^{-1}(\varepsilon)}. 
\end{equation}
Consider $\Delta^k_{x^*}$ as a simplex in $\Delta^k$, a Delaunay triangulation for $S^k$, that contains $x^*$.
Define $M(x): \Delta^k_{x^*} \rightarrow \mathbb{R}$ as the unique linear function in $\Delta^k_{x^*}$ such that
\begin{equation*}
M(V^k_j)= 2\, f(V^k_j)-p^k(V^k_j), 
\end{equation*} 
where $V^k_j$ are the vertices of $\Delta^k_{x^*}$.
Define $G(x): \Delta^k_{x^*} \rightarrow \mathbb{R}$ as follows:
\begin{equation*} \label{eq:Gdef}
G(x)=s_c^k(x)+M(x)-2\, f(x)=p^k(x)+M(x)-2\, f(x)-K^k\, e^k(x).
\end{equation*} 
By construction, $G(V^k_j)=0$. Moreover:
 \begin{equation*}
\nabla^2 G(x)=\nabla^2 \{p^k(x)-2\, f(x)\}+ 2\, K^k \, I.  
\end{equation*}
Using Assumption \ref{assumption.1} and \eqref{eq:hatkk}, 
$G(x)$ is strictly convex in simplex $\Delta^k_{{x^*}}$.
Since $G(x)=0$ at the vertices of $\Delta^k_{{x^*}}$, then $G(x^*)\leq 0$.
Moreover, since $M(x)$ is a linear function, then
\begin{gather}
\min_{x \in S^k} [2\, f(x)-p^k(x)] \le  \min_{1\le j \leq n+1} \,[2\,f(V^k_j)-p^k(V^k_j)] \le M(x^*), \notag \\ 
s_d^k(x)  \le  [2\, f(x)-p^k(x)] + 2\,(y_k(x)-f(x))-\alpha^k \sigma(x) \label{eq:dkfllf}.
\end{gather}
Using  \eqref{eq:T1} in Lemma \ref{lem:epsT1} and \eqref{eq:hatkk} leads to:
\begin{equation} \label{eq:fkflksigma}
2\, y_k(x)- 2\, f(x)-\alpha^k \sigma(x) \le 2\, \varepsilon. 
\end{equation}
Combining \eqref{eq:dkfllf} and \eqref{eq:fkflksigma} leads to:
\begin{equation*}
s_d^k(x) \le [2\, f(x)-p^k(x)] +2\, \varepsilon.
\end{equation*}
Since $x_j$ is the minimizer of the $s_d^k(x)$, 
\begin{equation*}
s_d^k(x_j) \le \min_{x \in S^k} [2\, f(x)-p^k(x)]+2\,\varepsilon \le M(x^*) +2\, \varepsilon.
\end{equation*}
Furthermore, $z$ is the global minimizer of $s_c^k(x)$ and  $G(x^*) \le 0$; therefore,
\begin{gather} 
s_c^k(z) \le s_c^k(x^*) \le 2\, f(x^*)- M(x^*), \notag  \\ 
s_c^k(z) +s_d^k(x_j) \le 2\, f(x^*) +2\, \varepsilon. \label{eq:scsdkxeps}
\end{gather}
Thus, for any $\varepsilon>0$ and $k> \hat{k}_{\varepsilon}$, \eqref{eq:scsdkxeps} is satisfied;
therefore, \eqref{eq:MK} is verified. 
\end{proof}

\begin{lemma} \label{lem:meshdecreasing}
If $\{k_1,k_2, \dots\}$ are the mesh refinement iterations of Algorithm \ref{algorithm:alphadogs},
then 
\begin{subequations} \label{eq:meshdecreasconvwhole}
\begin{gather} 
\limsup_{i \rightarrow \infty} \Big\{ y(\eta^{k_i},N^{k_i}_{\eta^{k_i}})-f(x^*) + \alpha^{k_i} \sigma(\eta^{k_i},N^{k_i}_{\eta^{k_i}}) \Big\} \leq 0, \quad and  \label{eq:meshdecreasconv} \\
\lim_{i \rightarrow \infty} \sigma(\eta^{k_i},N^{k_i}_{\eta^{k_i}}) =0, 
\label{eq:meshdecreasconvsigma}
\end{gather}
\end{subequations}
where $\eta^{k_i}$ is the candidate point at iteration $k_i$ and $x^*$ is a global minimizer of $f(x)$ in $L$.
\end{lemma}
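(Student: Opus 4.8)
The first step is to note that \eqref{eq:meshdecreasconvsigma} is a consequence of \eqref{eq:meshdecreasconv}. By Assumption \ref{assumption.2}, $y(\eta^{k_i},N^{k_i}_{\eta^{k_i}}) \ge f(\eta^{k_i}) - E(\sigma(\eta^{k_i},N^{k_i}_{\eta^{k_i}})) \ge f(x^*) - Q$, so \eqref{eq:meshdecreasconv} forces $\alpha^{k_i}\,\sigma(\eta^{k_i},N^{k_i}_{\eta^{k_i}})$ to stay bounded; since $\alpha^{k_i}\to\infty$ by Lemma \ref{lem:infinite_mesh_decrease} (as in the opening of the proof of Lemma \ref{lem:MKlimsup}), \eqref{eq:meshdecreasconvsigma} follows. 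Hence the whole burden is to establish \eqref{eq:meshdecreasconv}.

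For \eqref{eq:meshdecreasconv}, the plan is to exploit the two structural facts that characterize a mesh refinement iteration $k_i$ of Algorithm \ref{algorithm:alphadogs} — step~(6) did not fire, so $s_c^{k_i}(z)\le s_d^{k_i}(x_j)$ or $N_j^{k_i}\ge\gamma\,2^{\ell_{k_i}}$; and step~(7) did not fire, so the quantized point $z_\ell$ already belongs to $S^{k_i}$ — together with Lemma \ref{lem:MKlimsup}, which gives $s_c^{k_i}(z)+s_d^{k_i}(x_j)\le 2f(x^*)+2\varepsilon$ for all sufficiently large $k$. Fixing $\varepsilon\in(0,Q)$, I would first bound the discrete search value: in the branch $N_j^{k_i}\ge\gamma\,2^{\ell_{k_i}}$, Assumption \ref{assumption.3} gives $\sigma(x_j,N_j^{k_i})\le\alpha\,(\gamma\,2^{\ell_{k_i}})^{-\theta}$, which decays geometrically in $\ell_{k_i}$ and hence beats the merely linear growth of $\alpha^{k_i}$, so that (using the strict regression bound \eqref{eq:strict}, Assumption \ref{assumption.2} and Lemma \ref{lem:epsT1}) $s_d^{k_i}(x_j)\ge f(x^*)-o(1)$; in the complementary branch, Lemma \ref{lem:MKlimsup} gives $2\,s_c^{k_i}(z)\le 2f(x^*)+2\varepsilon$ directly. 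The second step is to transfer these bounds to the candidate point: since $\eta^{k_i}$ minimizes $y+\alpha^{k_i}\sigma$ over $S^{k_i}$, and since $z_\ell\in S^{k_i}$ with $e^{k_i}(z_\ell)=0$, one compares $y(\eta^{k_i})+\alpha^{k_i}\sigma(\eta^{k_i})$ against quantities evaluated at $z_\ell$ and folds in the definitions \eqref{eq:dis_search}, \eqref{eq:continuous_search}, the elementary inequality $\min\{p,2y-p\}\le y$, the strict regression, Assumption \ref{assumption.2}, and the quantization error $\delta_{L_{\ell_{k_i}}}$ (which vanishes as $i\to\infty$ by Remark \ref{rem:cartgrid}). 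Letting $\varepsilon\to0$ then yields \eqref{eq:meshdecreasconv}.

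The main obstacle is the second step — propagating a bound on the search functions at $z$ or $x_j$ to the candidate point $\eta^{k_i}$ — and, within it, controlling the interacting quantities $K^{k_i}\to\infty$, $\alpha^{k_i}\to\infty$ and $\delta_{L_{\ell_{k_i}}}\to 0$ together with the remoteness term $e^{k_i}$: step~(8) doubles $K$ and halves $\delta_{L_\ell}$ simultaneously, so one must check that their product, and the associated $\delta_{L_{\ell_{k_i}}}$-order terms (carrying Lipschitz constants of $p^{k_i}$, $f$ and $e^{k_i}$, all uniformly bounded on $L$ by Assumption \ref{assumption.1}), contribute only vanishingly along $k_1,k_2,\dots$. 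The key leverage for bounding $e^{k_i}(z)$ is that $z_\ell\in S^{k_i}$ lies within $\delta_{L_{\ell_{k_i}}}$ of $z$ while, by the empty-circumsphere property of the Delaunay triangulation $\Delta^{k_i}$, $z_\ell$ is excluded from the open circumball of the simplex containing $z$, which forces $0\le e^{k_i}(z)$ to be small; making this quantitative, and ruling out degeneracy of that simplex via the support points, is precisely where the coordination of the grid level with the growth of $K$ and $\alpha$ built into step~(8) is essential. Once these estimates are in place, the remainder is bookkeeping with Lemmas \ref{lem:epsT1} and \ref{lem:MKlimsup} and Assumptions \ref{assumption.2} and \ref{assumption.3}.
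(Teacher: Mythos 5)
Your overall architecture matches the paper's: split on why step (6) did not fire, invoke Lemma \ref{lem:MKlimsup}, transfer an upper bound to the candidate point via its minimality over $S^{k_i}$, and let the explicit rates ($K^{k_i}\delta_{k_i}^2\sim 2^{-i}$, $\alpha^{k_i}2^{-\theta\ell}\rightarrow 0$) absorb the error terms; your derivation of \eqref{eq:meshdecreasconvsigma} from \eqref{eq:meshdecreasconv} is also exactly the paper's. However, there are two genuine gaps. First, in the branch where refinement is triggered by $N_j\ge\gamma\,2^{\ell}$ while $s_c^{k_i}(z)>s_d^{k_i}(x_j)$, you derive a \emph{lower} bound $s_d^{k_i}(x_j)\ge f(x^*)-o(1)$ and then propose to compare the candidate point against $z_\ell$. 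This fails: the control $\alpha^{k_i}\sigma(z_\ell,N^{k_i}_{z_\ell})\le(\hat K+2K^{k_i})\delta_{k_i}^2$ comes from the chain $s_c^{k_i}(z)\le s_d^{k_i}(x_j)\le s_d^{k_i}(z_\ell)\le p^{k_i}(z_\ell)-\alpha^{k_i}\sigma(z_\ell,N^{k_i}_{z_\ell})$, which is available only in the other branch; here $z_\ell$ may carry only $N^0$ samples, so $\alpha^{k_i}\sigma(z_\ell,N^{k_i}_{z_\ell})$ can diverge and the comparison at $z_\ell$ proves nothing. The correct comparison point in this branch is $x_j$ itself: one uses the \emph{upper} bound $s_d^{k_i}(x_j)-f(x^*)=M_{k_i}$ together with strictness, $y(x_j,N^{k_i}_{x_j})-s_d^{k_i}(x_j)\le(\beta+\alpha^{k_i})\sigma(x_j,N^{k_i}_{x_j})$, and the geometric decay of $\sigma(x_j,N^{k_i}_{x_j})$ guaranteed by $N_j\ge\gamma 2^{\ell}$ and Assumption \ref{assumption.3}. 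Your lower bound on $s_d^{k_i}(x_j)$ points the estimate in the wrong direction for this transfer.

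Second, your treatment of the quantization term proposes to bound $e^{k_i}(z)$ directly via the empty-circumsphere property, and you correctly identify that this requires controlling circumradii of possibly degenerate simplices --- but you leave that unresolved (and the support-point machinery you invoke belongs to $\Delta$-DOGS($Z$), not to Algorithm \ref{algorithm:alphadogs}). The paper never bounds $e^{k_i}(z)$ at all: it applies Lemma \ref{lem:Gactin} to $G(x)=p^{k_i}(x)-K^{k_i}e_j^{k_i}(x)$ on the simplex containing $z$, using the Hessian bounds from Assumption \ref{assumption.1} and property (c) of the remoteness function together with $A_a(z)\subseteq A_a(z_\ell)$ from Remark \ref{rem:cartgrid}(c), to obtain $s_c^{k_i}(z_\ell)-s_c^{k_i}(z)\le(\hat K+2K^{k_i})\norm{z_\ell-z}^2$ with no reference to circumradii. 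That concavity/KKT argument is the missing ingredient that makes the $\delta_{k_i}^2$ bookkeeping go through; you should adopt it in place of the circumsphere estimate.
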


\begin{proof}
Consider $z$ as a global minimizer of $s^{k_i}_c(x)$ in $L$, and $z_\ell$ as its quantization on $L_\ell$.
Since iteration $k_i$ is a mesh refinement, $z_\ell \in S^{k_i}$.
Consider $\Delta^{k_i}_j$ as a simplex in the Delaunay triangulation $\Delta^{k_i}$ 
which contains $z$. By property (d) of the remoteness function $e^{k_i}(x)$,
\begin{gather*}
e^{k_i}(z_\ell) \ge e^{k_i}_j(z_\ell), \quad e^{k_i}(z) = e^{k_i}_j(z), \\
s^{k_i}_c(z_\ell)- s^{k_i}_c(z) = p^{k_i}(z_\ell)- p^{k_i}(z) +K^{k_i}(e^{k_i}(z)-e^{k_i}(z_\ell)), \\
s^{k_i}_c(z_\ell)- s^{k_i}_c(z) \le p^{k_i}(z_\ell)- p^{k_i}(z) +K^{k_i}(e_j^{k_i}(z)-e_j^{k_i}(z_\ell)). 
\end{gather*}
By Property (c) of Remark \ref{rem:cartgrid}
concerning the Cartesian grid quantizer, 
$A_a(z) \subseteq A_a(z_\ell)$. 
According to Assumption \ref{assumption.1} and Property (c) of the remoteness function introduced in Definition \ref{def:uncert},
$\nabla^2\{p^{k_i}(x) - K^{k_i}e_j^{k_i}(x)\}
- \{\hat{K}+2\, K^{k_i}\} I \le 0$; 
thus, by Lemma \ref{lem:Gactin} and the fact (see Lemma 5 in \cite{beyhaghi_1} for proof) that $z$ globally minimizes $p^{k_i}(x) - K^{k_i}e_j^{k_i}(x)$,   
\begin{equation}
s^{k_i}_c(z_\ell)- s^{k_i}_c(z)
\leq\{\hat{K}+2\, K^{k_i}\} \norm{z_\ell-z}^2.
\end{equation}
Define $\delta_{k_i}$ as the maximum quantization error at iteration $k_i$, then
$\norm{z_\ell-z}\le \delta_{k_i}$.
On the other hand, $z_\ell \in S^{k_i}$, which leads to $s_c^{k_i}(z_\ell)=p^{k_i}(z_\ell)$, and
\begin{equation} \label{eq:zkkhat}
p^{k_i}(z_\ell) \le s_c^{k_i}(z)+ \{\hat{K}+2\, K^{k_i}\} \delta_{k_i}^2.
\end{equation}
At each mesh refinement iteration of Algorithm \ref{algorithm:alphadogs},
there are two possibilities.  
In the first case, $s^{k_i}_c(z) \le s^{k_i}_d(x_j)$;
since $x_j$ is a minimizer of $s_d^{k_i}(x)$, then
\begin{gather} 
p^{k_i}(z_\ell) \le s^{k_i}_d(z_\ell)+ \{\hat{K}+2\, K^{k_i}\} \delta_{k_i}^2, \notag \\
s^{k_i}_d(z_\ell) \le p^{k_i}(z_\ell) - \alpha^{k_i} \sigma(z_\ell,N^{k_i}_{z_\ell}), \notag \\
\sigma(z_\ell,N^{k_i}_{z_\ell}) \le  \frac{\{\hat{K}+2\, K^{k_i}\}}{\alpha^{k_i}} \delta_{k_i}^2. \label{eq:sigmadelta}
\end{gather}
Using \eqref{eq:Mk} (see Lemma \ref{lem:MKlimsup}) and \eqref{eq:zkkhat} leads to
\begin{equation}
p^{k_i}(z_\ell) -f(x^*) \le M_{k_i}+ \{\hat{K}+2\, K^{k_i}\} \delta_{k_i}^2. \label{eq:pkdelta}
\end{equation}
Since the regression is strict,
\begin{equation}\label{eq:explorstrict}
{y(z_\ell,N^{k_i}_{z_\ell})-p^{k_i}(z_\ell)} \le \beta\,
\sigma(z_\ell,N^{k_i}_{z_\ell}).
\end{equation} 
Using \eqref{eq:sigmadelta}, \eqref{eq:pkdelta}, and \eqref{eq:explorstrict} leads to 
\begin{equation} \label{eq:fkexplor}
y(z_\ell,N^{k_i}_{z_\ell}) -f(x^*) \le M_{k_i}+ \{\hat{K}+2\, K^{k_i}\} \delta_{k_i}^2 +\beta \Big[ \frac{\{\hat{K}+2\, K^{k_i}\}}{\alpha^{k_i}} \delta_{k_i}^2\Big]. 
\end{equation}
In the second case, $s^{k_i}_c(z) > s^{k_i}_d(x_j)$, then by the construction of $M_{k_i}$ (see \eqref{eq:Mk}),
\begin{equation}\label{eq:dkimpro}
s^{k_i}_d(x_j) -f(x^*) = M_{k_i}.
\end{equation}
Moreover, since iteration $k_i$ is mesh refinement, then the sampling $N_j\ge \gamma 2^\ell$. 
Thus, using Assumption \ref{assumption.3},
\begin{equation}\label{eq:sigmaimpro}
\sigma(x_j,N^{k_i}_{x_j}) \le \alpha\, \gamma^{-\theta} 2^{-\theta\ell}.  
\end{equation}
Furthermore, the regression $p^{k_i}(x)$ is strict which leads to:
\begin{equation}\label{eq:strictimpro}
y(x_j,N^{k_i}_{x_j})-s^{k_i}_d(x_j) \le
(\beta+\alpha^{k_i})\, \sigma(x_j,N^{k_i}_{x_j}) 
\end{equation} 
Using \eqref{eq:dkimpro}, \eqref{eq:sigmaimpro}, and \eqref{eq:strictimpro} 
leads to
\begin{equation} \label{eq:fkimpro} 
y(x_j,N^{k_i}_{x_j})-f(x^*)
\le M_{k_i}+ (\beta + \alpha^{k_i} ) \alpha \gamma^{-\theta} 2^{-\theta\ell}. 
\end{equation}

Note that $\eta^{k_i}$ is the candidate point at iteration $k_i$.  Thus,
using \eqref{eq:sigmadelta}, \eqref{eq:fkexplor}, \eqref{eq:sigmaimpro}, and \eqref{eq:fkimpro},
and the construction of candidate point (see Definition \ref{def:candidatepoint}), 
\begin{gather}
y(\eta^{k_i},N^{k_i}_{\eta^{k_i}})-f(x^*)+\alpha^{k_i} \sigma(\eta^{k_i},N^{k_i}_{\eta^{k_i}}) \le M_{k_i} + \notag \\
\max \Big \{
(\beta + \alpha^{k_i}) \alpha \gamma^{-\theta} 2^{-\theta\ell},  
(\hat{K}+2\, K^{k_i}) \delta_{k_i}^2 +\beta \Big[ \frac{(\hat{K}+2\, K^{k_i})}{\alpha^{k_i}} \delta_{k_i}^2 \Big] \Big \} \notag \\ 
+ \alpha^{k_i} \max \Big \{
\alpha \gamma^{-\theta} 2^{-\theta\ell}
, 
\frac{(\hat{K}+2\, K^{k_i})}{\alpha^{k_i}} \delta_{k_i}^2
 \Big \}.\label{eq:wki1steq}  
\end{gather}
On the other hand,
\begin{gather}
\delta_{k_i}= \frac{\norm{b-a}}{2^{\ell_0+i}}, \, \quad \alpha^{k_i}= \alpha^0 + i\,\alpha^\delta, \, \quad K^{k_i}= K_0 \, 2^i, \quad \ell^{k_i}= \ell^0+i.  \label{eq:fzkix1}
\end{gather}
By substituting \eqref{eq:fzkix1} in \eqref{eq:wki1steq} and using \eqref{eq:MK} (see Lemma \ref{lem:MKlimsup}),  \eqref{eq:meshdecreasconv} is verified. 
Furthermore, using Assumption \ref{assumption.2}, we have
\begin{equation} \label{eq:yhatfetae}
\abs{y(\eta^{k_i},N^{k_i}_{\eta^{k_i}}) - f(\eta^{k_i})} \le E (\eta^{k_i},N^{k_i}_{\eta^{k_i}}) \le Q.  
\end{equation}
Thus, using \eqref{eq:meshdecreasconv}, $f(\eta^{k_i})-f(x^*)>0$, and \eqref{eq:yhatfetae} leads to
\begin{equation*}
\limsup_{i \rightarrow \infty} \Big\{-Q + \alpha^{k_i} \sigma(\eta^{k_i}, N_{\eta^{k_i}}^{k_i}) \Big\} \le 0.
\end{equation*}
Since $\sigma(\eta^{k_i}, N_{\eta^{k_i}}^{k_i}) \geq 0$ and $ \lim_{i \rightarrow \infty} \alpha^{k_i}= \infty$, \eqref{eq:meshdecreasconvsigma} is verified.


\end{proof}

\begin{theorem}
Consider $\eta^k$ as the candidate point at iteration $k$ of Algorithm \ref{algorithm:alphadogs}, then
\begin{equation}\label{eq:convergence}
\lim_{k \rightarrow \infty} f(\eta^k) =f(x^*),
\end{equation}
where $x^*$ is a global minimizer of $f(x)$.
\begin{proof}

At any iteration $k>k_1$, take $k_i< k$ as the most recent mesh refinement iteration of Algorithm \ref{algorithm:alphadogs}.  Then $\eta^{k_i} \in S^k$, and
\begin{equation}
y(\eta^k, N_{\eta^k}^k)+\alpha^{k} \sigma(\eta^k, N_{\eta^k}^k) \le y(\eta^{k_i}, N_{\eta^{k_i}}^k)+\alpha^{k} \sigma(\eta^{k_i}, N_{\eta^{k_i}}^k).
\end{equation}
Using Assumption \ref{assumption.2} leads to:
\begin{gather*}
\abs{
y(\eta^{k_i}, N_{\eta^{k_i}}^k)-
y(\eta^{k_i}, N_{\eta^{k_i}}^{k_i}) 
}
\le 
E( \sigma(\eta^{k_i},N_{\eta^{k_i}}^{k_i}))+
E( \sigma(\eta^{k_i}, N_{\eta^{k_i}}^k)), \\
y(\eta^k, N_{\eta^k}^k)+\alpha^{k} \sigma(\eta^k, N_{\eta^k}^k) \le  y(\eta^{k_i}, N_{\eta^{k_i}}^{k_i})+
\alpha^{k} \sigma(\eta^{k_i},N_{\eta^{k_i}}^{k_i})+
\notag \\
E( \sigma(\eta^{k_i},N_{\eta^{k_i}}^{k_i}))+
E( \sigma(\eta^{k_i}, N_{\eta^{k_i}}^k)). 
\end{gather*}
By construction, since the sampling at $\eta^{k_i}$ at iteration $k$ is greater than or equal to its sampling at iteration $k_i$,
$\sigma(\eta^{k_i}, N_{\eta^{k_i}}^k) \le \sigma(\eta^{k_i}, N_{\eta^{k_i}}^{k_i})$. 
Since the function $E(x)$ is nondecreasing, 
\begin{equation*}
y(\eta^k, N_{\eta^k}^k)+\alpha^{k} \sigma(\eta^k, N_{\eta^k}^k) \le  y(\eta^{k_i}, N_{\eta^{k_i}}^{k_i})+
\alpha^{k} \sigma(\eta^{k_i},N_{\eta^{k_i}}^{k_i})+
2\, E( \sigma(\eta^{k_i},N_{\eta^{k_i}}^{k_i})). 
\end{equation*}
Using \eqref{eq:meshdecreasconvwhole} in Lemma \ref{lem:meshdecreasing},  Assumption \ref{assumption.2}, and $\alpha^k=\alpha^{k_i}+\alpha^\delta$, leads to:
\begin{equation}
\limsup_{k \rightarrow \infty} y(\eta^k, N_{\eta^k}^k)-f(x^*)+\alpha^k \sigma(\eta^k, N_{\eta^k}^k)\le
\limsup_{k \rightarrow \infty} \big\{ \alpha^\delta \sigma(\eta^{k_i},N_{\eta^{k_i}}^{k_i})
\big\}= 0.
\end{equation}
Similar to the proof of \eqref{eq:meshdecreasconvsigma}, it is thus again easy to show 
\begin{equation*}
    \lim_{i \rightarrow \infty} \sigma(\eta^k, N_{\eta^k}^k)=0.
\end{equation*}
On the other hand, based on Assumption \ref{assumption.2}, and optimality of $f(x^*)$
\begin{gather*}
f(\eta^k)+\alpha^k \sigma(\eta^k, N_{\eta^k}^k) -f(x^*) - E(\sigma(\eta^k, N_{\eta^k}^k)) \le y(\eta^k, N_{\eta^k}^k) +\alpha^k \sigma(\eta^k, N_{\eta^k}^k) -f(x^*), \notag \\
\lim_{k \rightarrow \infty} E(\sigma(\eta^k, N_{\eta^k}^k)) = 0, \\
\limsup_{k \rightarrow \infty} f(\eta^k)-f(x^*) \le 0.
\end{gather*}
Since $f(\eta^k)-f(x^*) \ge 0$, \eqref{eq:convergence} is verified.  
\end{proof}
\end{theorem}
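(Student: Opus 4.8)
The plan is to bootstrap from the mesh-refinement subsequence, already controlled by Lemma~\ref{lem:meshdecreasing}, to the full sequence of iterations. Fix an arbitrary iteration $k>k_1$ and let $k_i<k$ denote the most recent mesh-refinement iteration preceding $k$; by Lemma~\ref{lem:infinite_mesh_decrease} such an iteration always exists, and $i\to\infty$ as $k\to\infty$. Since no datapoint is ever discarded, $\eta^{k_i}\in S^k$, so the defining minimization of the candidate point (Definition~\ref{def:candidatepoint}) gives
\[
y(\eta^k,N_{\eta^k}^k)+\alpha^k\sigma(\eta^k,N_{\eta^k}^k)\le y(\eta^{k_i},N_{\eta^{k_i}}^k)+\alpha^k\sigma(\eta^{k_i},N_{\eta^{k_i}}^k).
\]

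The crux --- and the step I expect to require the most care --- is to replace the right-hand side, which involves the sampling level $N_{\eta^{k_i}}^k$ at the later iteration $k$, by quantities available at iteration $k_i$, where Lemma~\ref{lem:meshdecreasing} applies. Because the sampling at any fixed point is nondecreasing in the iteration index, $N_{\eta^{k_i}}^k\ge N_{\eta^{k_i}}^{k_i}$, so by monotonicity of $\sigma$ in $N$ (Assumption~\ref{assumption.3}) the uncertainty only shrinks: $\sigma(\eta^{k_i},N_{\eta^{k_i}}^k)\le\sigma(\eta^{k_i},N_{\eta^{k_i}}^{k_i})$. For the measurement term I would apply Assumption~\ref{assumption.2} twice, passing through $f(\eta^{k_i})$: both $y(\eta^{k_i},N_{\eta^{k_i}}^k)$ and $y(\eta^{k_i},N_{\eta^{k_i}}^{k_i})$ lie within $E(\sigma(\eta^{k_i},N_{\eta^{k_i}}^{k_i}))$ of $f(\eta^{k_i})$ (using that $E$ is nondecreasing and that $\sigma$ has shrunk), hence they differ by at most $2E(\sigma(\eta^{k_i},N_{\eta^{k_i}}^{k_i}))$. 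Writing $\alpha^k=\alpha^{k_i}+\alpha^\delta$, since $\alpha$ changes only at mesh refinements, and collecting terms yields
\[
y(\eta^k,N_{\eta^k}^k)+\alpha^k\sigma(\eta^k,N_{\eta^k}^k)-f(x^*)\le\big[y(\eta^{k_i},N_{\eta^{k_i}}^{k_i})-f(x^*)+\alpha^{k_i}\sigma(\eta^{k_i},N_{\eta^{k_i}}^{k_i})\big]+\alpha^\delta\sigma(\eta^{k_i},N_{\eta^{k_i}}^{k_i})+2E(\sigma(\eta^{k_i},N_{\eta^{k_i}}^{k_i})).
\]

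Now I would let $k\to\infty$, so $i\to\infty$. The bracketed term has nonpositive $\limsup$ by \eqref{eq:meshdecreasconv}; the $\alpha^\delta\sigma$ term vanishes by \eqref{eq:meshdecreasconvsigma}; and the $E(\sigma)$ term vanishes by \eqref{eq:meshdecreasconvsigma} together with continuity of $E$ and $E(0)=0$ (Assumption~\ref{assumption.2}(a)). Hence $\limsup_{k\to\infty}\{y(\eta^k,N_{\eta^k}^k)+\alpha^k\sigma(\eta^k,N_{\eta^k}^k)-f(x^*)\}\le0$. From here I would mirror the end of the proof of Lemma~\ref{lem:meshdecreasing}: since $f(\eta^k)\ge f(x^*)$ and $|y(\eta^k,N_{\eta^k}^k)-f(\eta^k)|\le Q$ by Assumption~\ref{assumption.2}, the displayed bound forces $\limsup_{k\to\infty}\{-Q+\alpha^k\sigma(\eta^k,N_{\eta^k}^k)\}\le0$, and since $\alpha^k\to\infty$ this gives $\sigma(\eta^k,N_{\eta^k}^k)\to0$, whence $E(\sigma(\eta^k,N_{\eta^k}^k))\to0$. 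Finally, Assumption~\ref{assumption.2} gives $f(\eta^k)-f(x^*)\le[y(\eta^k,N_{\eta^k}^k)+\alpha^k\sigma(\eta^k,N_{\eta^k}^k)-f(x^*)]+E(\sigma(\eta^k,N_{\eta^k}^k))$, whose $\limsup$ is therefore $\le0$; combined with $f(\eta^k)-f(x^*)\ge0$ this establishes \eqref{eq:convergence}.
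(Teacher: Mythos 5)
Your proposal is correct and follows essentially the same route as the paper's own proof: bootstrapping from the mesh-refinement subsequence via the candidate-point minimization, applying the triangle inequality through $f(\eta^{k_i})$ with Assumption \ref{assumption.2}, using monotonicity of $\sigma$ in $N$ and $\alpha^k=\alpha^{k_i}+\alpha^\delta$, then invoking Lemma \ref{lem:meshdecreasing} and finally deducing $\sigma(\eta^k,N_{\eta^k}^k)\to 0$ to conclude. If anything, your write-up is slightly more explicit than the paper's in justifying why the $2E(\sigma(\eta^{k_i},N_{\eta^{k_i}}^{k_i}))$ term vanishes (via \eqref{eq:meshdecreasconvsigma}, continuity of $E$, and $E(0)=0$).
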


\section{Results}
\label{sec:result}

We now illustrate the performance of Algorithm \ref{algorithm:alphadogs} on some representative examples.  The function $g(x,k)$ in \eqref{eq:infinitecost} is assumed to be a discrete-time statistically stationary random ergodic process.
In this section, we further assume that $g(x,k)$ is IID in the index $k$, and that the variation of $g(x,k)$ from the truth function $f(x)$ is homogeneous in $x$.
In particular, 
\begin{gather*}
g(x,k)=f(x)+v_k \quad
\text{where} \ v_k=\mathcal{N}(0,0.3).
\end{gather*}

In this section, two different test functions for $f(x)$ are considered within the simple feasible domain $L=\{x|0 \le x_i \le 1\ \forall i\}$, the
\textit{shifted parabolic function} 
\begin{equation}\label{eq:parab}
                f(x) = \frac{5}{n} \sum_{i=1}^n (x_i-0.3)^2, 
\end{equation}
with a global minimizer in $L$ of $x^*_i=0.3$ and a corresponding global minimum of $f(x^*)=0$, and the \textit{scaled Schwefel fuction}
\begin{equation}\label{eq:Schwefel}
                f(x) = 0.83797 - \frac{1}{n} \sum_{i=1}^n x_i \sin (500\, \abs{x_i}),
\end{equation}
with a global minimizer in $L$ of $x^*_i=0.8419$ and a corresponding global minimum of $f(x^*)=0$. We will consider these two functions in $n=1$, 2, and 3 dimensions.

One-dimensional representations of these functions are illustrated in Fig \ref{fig:testproblems}:
for the shifted parabolic function \eqref{eq:parab}, the truth function (unknown to the optimization algorithm) is a simple parabola, whereas for the scaled Schwefel fuction \eqref{eq:Schwefel}, the truth function is a smooth nonconvex function with four local minima.
Note that the perturbations present in several measurements of these functions, computed with finite $N_i$, result in a complicated, nonsmooth, nonconvex behavior.  This paper shows how to efficiently minimize such functions based only on such noisy measurements, automatically refining the measurements (by increasing the sampling) as convergence is approached.

\begin{figure*}
\centerline{
\begin{subfigure}[b]{.5\textwidth}
    \centering
        \includegraphics[width=1.1\columnwidth]{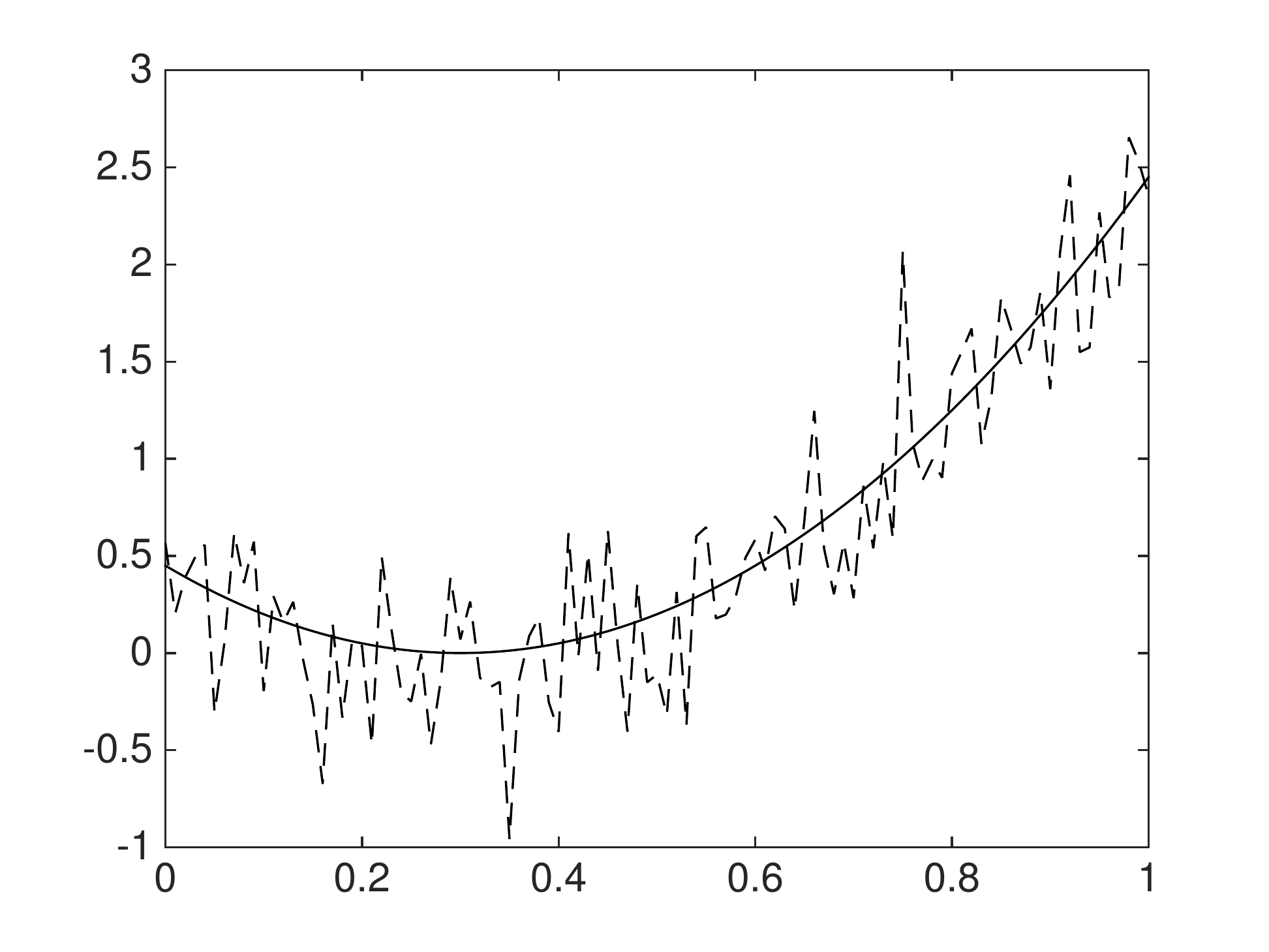}\vskip-0.01in
    \caption{Shifted parabolic function \eqref{eq:parab}.} 
    \end{subfigure}
\begin{subfigure}[b]{.5\textwidth}
  \centering
    \includegraphics[width=1.1\columnwidth]{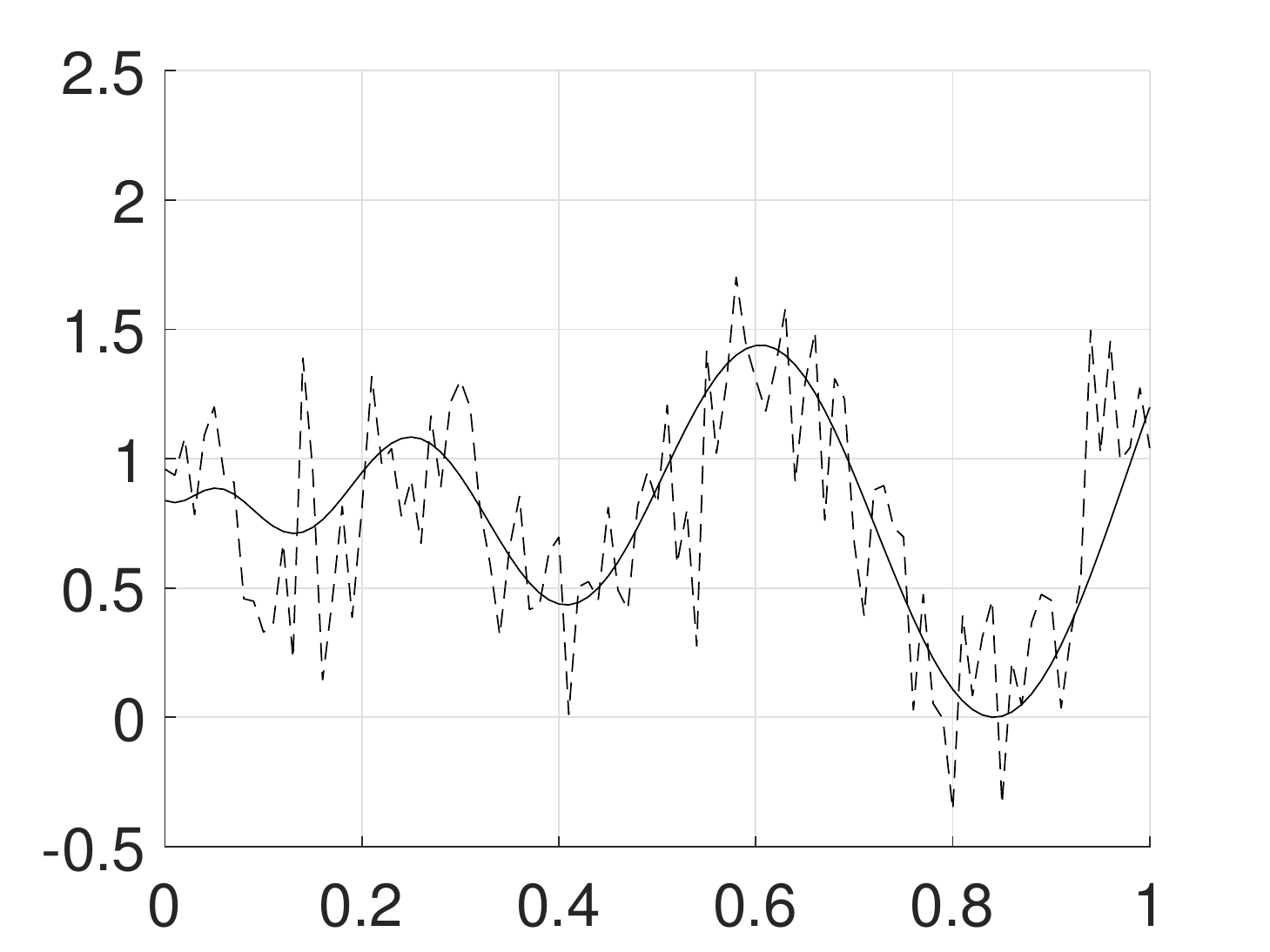}\vskip-0.01in
    \caption{Scaled Schwefel fuction \eqref{eq:Schwefel}.}  
     \end{subfigure}
     }
     \caption{Illustration of test problems \eqref{eq:parab} and \eqref{eq:Schwefel}. 
     (solid line) truth function $f(x)$, and (dashed line) a set of measurements $y_i$ computed with a single sample at each measurement, $N_i=1$.}
     \label{fig:testproblems}
\end{figure*}

The optimizations are initialized with measurements of sample length $N^0=1$ at the vertices of $L$.  Figure \ref{fig:1dilluster_implement} illustrates the application of Algorithm \ref{algorithm:alphadogs} after $k=200$ iterations in the 1D case, taking $N^0=N^\delta=1$ additional sample (at either a new measurement point, or at an existing measurement point) at each iteration of the algorithm.  In Figure
\ref{fig:1dilluster_implement}a, the sampling $N_i$ after $k=100$ iterations (plus the 2 initial sample points, for a total of $202$ samples) at the $M=5$ measured points $y_i$ indicated, enumerated from left to right, is $\{25,94,58,24,1\}$; in Figure
\ref{fig:1dilluster_implement}b, the sampling $N_i$ after $202$ iterations at the $7$ measured points indicated is $\{7,6,11,4,55,115,4\}$.  Both results clearly show that the algorithm focuses the bulk of its sampling in the immediate vicinity of the minimum, where the accuracy of the measurements is especially important, while avoiding unnecessary sampling far from the minimum, where the accuracy of the measurements is of reduced importance. It is also seen that more exploration is performed for the scaled Schwefel function than for the shifted parabolic function, as a result of its more complex underlying trend.

\begin{figure*}
\centerline{
\begin{subfigure}[b]{.5\textwidth}
    \centering
        \includegraphics[width=1.1\columnwidth]{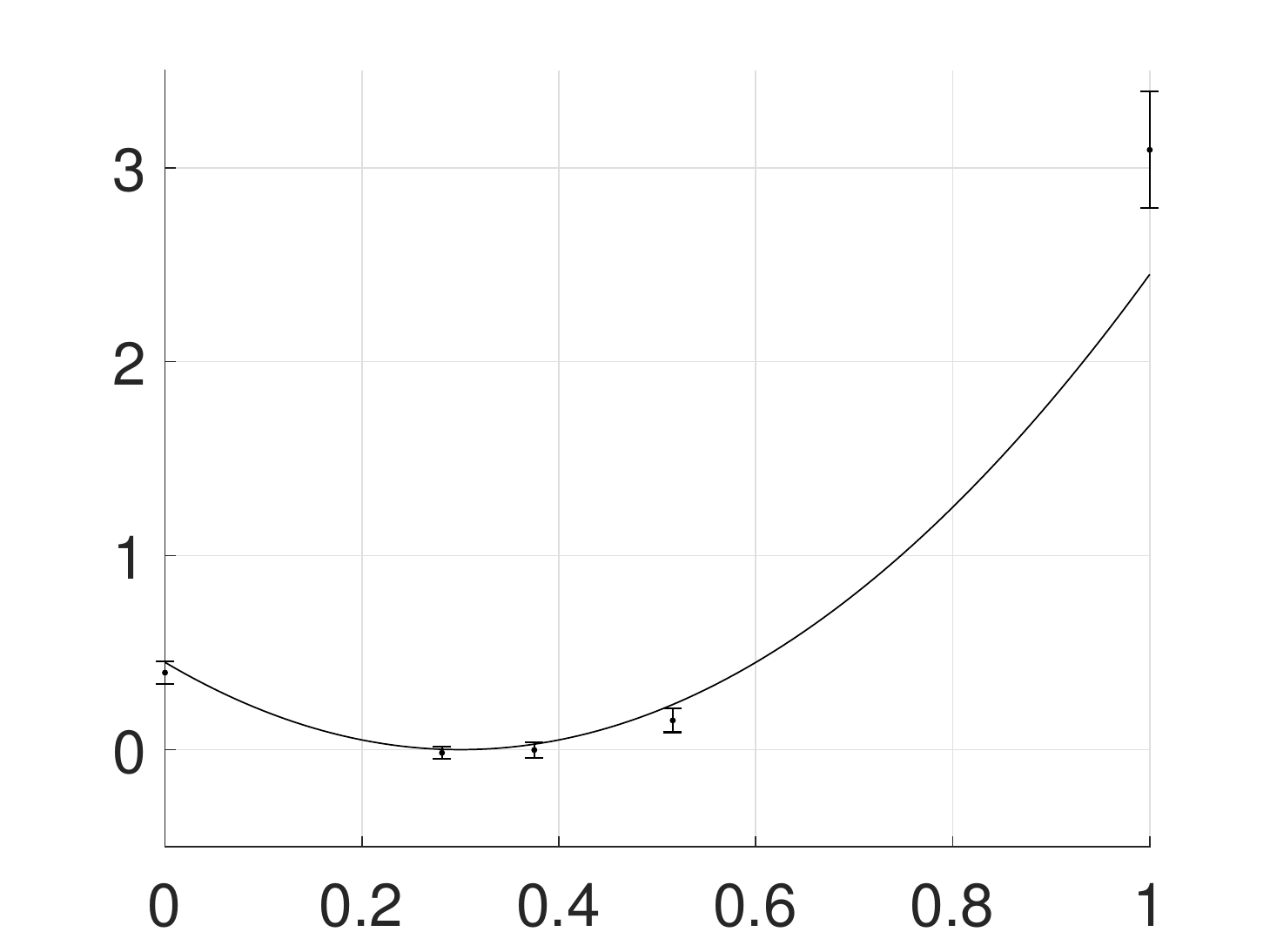}\vskip-0.01in
    \caption{Shifted parabolic function \eqref{eq:parab}.} 
    \end{subfigure}
\begin{subfigure}[b]{.5\textwidth}
  \centering
    \includegraphics[width=1.1\columnwidth]{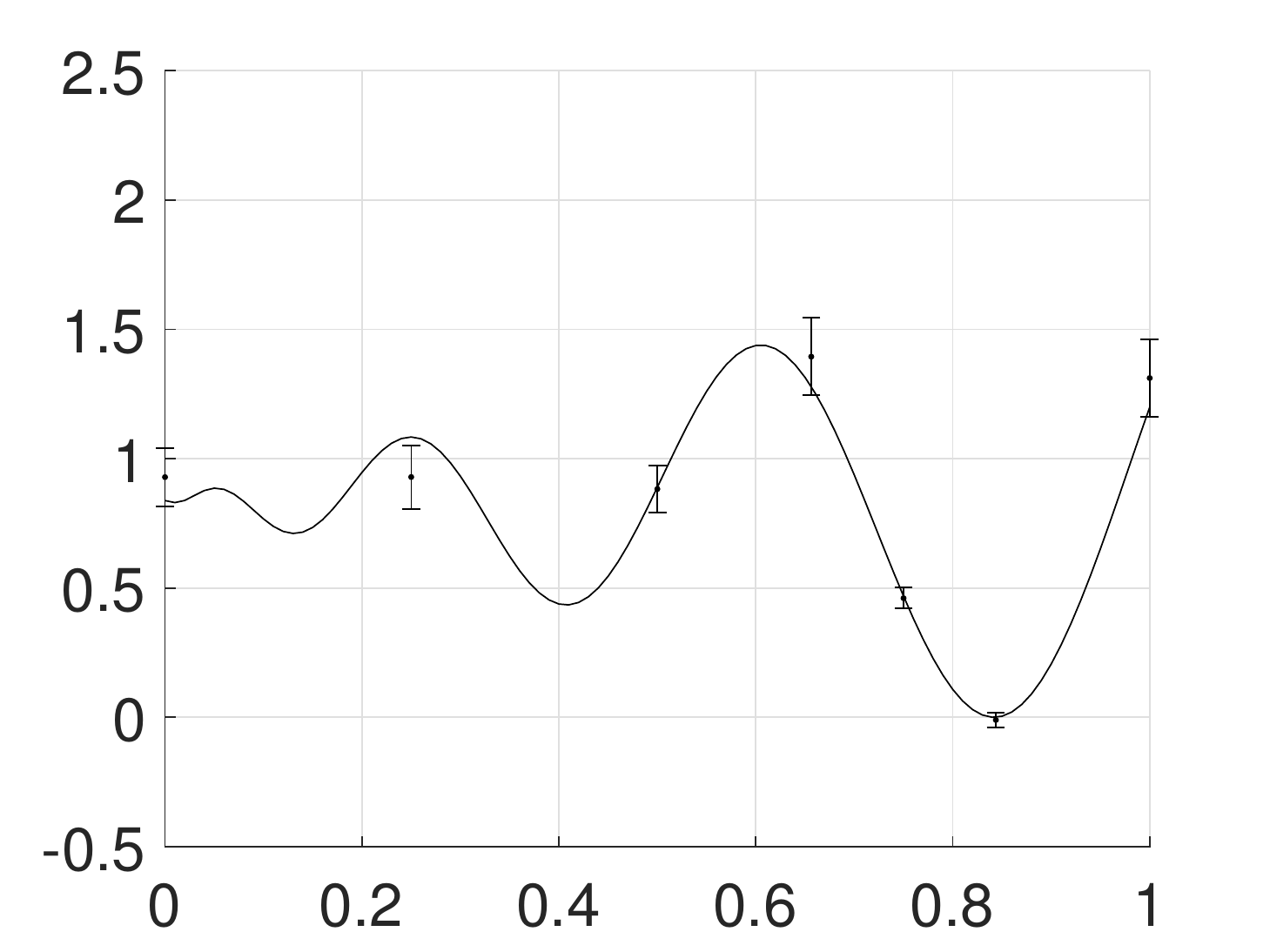}\vskip-0.01in
    \caption{Scaled Schwefel function \eqref{eq:Schwefel}.}  
     \end{subfigure}
     }
     \caption{Illustration of Algorithm \ref{algorithm:alphadogs} on model problems \eqref{eq:parab} and \eqref{eq:Schwefel} in 1D after 100 iterations, taking $N^0=N^\delta=1$: 
     (solid line) the truth function $f(x)$, and (error bars) the 66 percent confidence intervals of the measurements.}
     \label{fig:1dilluster_implement}
\end{figure*}

Since the function evaluation process in these tests has a stochastic component,
Algorithm \ref{algorithm:alphadogs} was next applied an ensemble of twenty separate tests, for both model problems discussed above,
in each of three different cases with increasingly higher dimension (that is, $n=1$, $n=2$, and $n=3$).  The
convergence histories of these simulations are illustrated in Figures \ref{fig:parabhis} and \ref{fig:shwefelbhis}.

To better quantify the performance of the algorithm proposed, we now introduce the following concept.

\vskip0.05in
\begin{definition}
Assume that the stationary process $g(x,k)$ is IID, and that the nominal variance $\sigma(x_i,1)=\sigma_0$ for all points $x_i \in L$.  As mentioned in Remark \ref{remark:IID}, denoting $N_i$ as the total number of samples taken at point $x_i$, the $99.6$ confidence intervals of the corresponding measurement $y_i$ given by \eqref{eq:finitecost} is $3 \, \sigma_i=3 \, \sigma(x_i,N_i)=3 \,\sigma_0/\sqrt{N_i}$.  If we assume that all of sampling of the algorithm is performed at a single point, the uncertainty of this single measurement after $k$ samples would thus be $\sigma_0/\sqrt{k}$, which we refer to as the \textit{reference error}. 
This function is indicated in Figures \ref{fig:parabhis} and \ref{fig:shwefelbhis} by a solid line of slope $-{1}/{2}$ in log-log coordinates.
\end{definition} 

\vskip0.05in
It is observed that, (see Figures \ref{fig:parabhis} and \ref{fig:shwefelbhis}), in which we have again taken 1 new sample at each iteration, the averaged value of the regret function of Algorithm \ref{algorithm:alphadogs} is eventually diminished to a value close to the reference error.  That is, the value of the regret at the end of these optimizations is actually proportional to the uncertainty of a single measurement, assuming that all of the sampling is done at a single point.

Figures \ref{fig:parabhis} and \ref{fig:shwefelbhis} also report the number of datapoints which are considered by the optimization algorithm as the iterations proceed.  This number is important in optimization problems for which the function evaluations are obtained from simulations which have an (expensive) initial transient, which must be set aside before sampling the statistic of interest, as discussed further in Remark \ref{rem:transient}. It is observed, as in the 1D case illustrated in Figure \ref{fig:1dilluster_implement}, that the number of datapoints that are considered for the shifted parabolic function is less than that for the scaled Schwefel function.
Further, the regret function converges faster to the general proximity of the global solution for the shifted parabolic function.
This result is reasonable, since the underlying function in the shifted parabolic case is much simpler.

\begin{figure*}
\centerline{
\begin{subfigure}[b]{.5\textwidth}
    \centering
        \includegraphics[width=1.1\columnwidth]{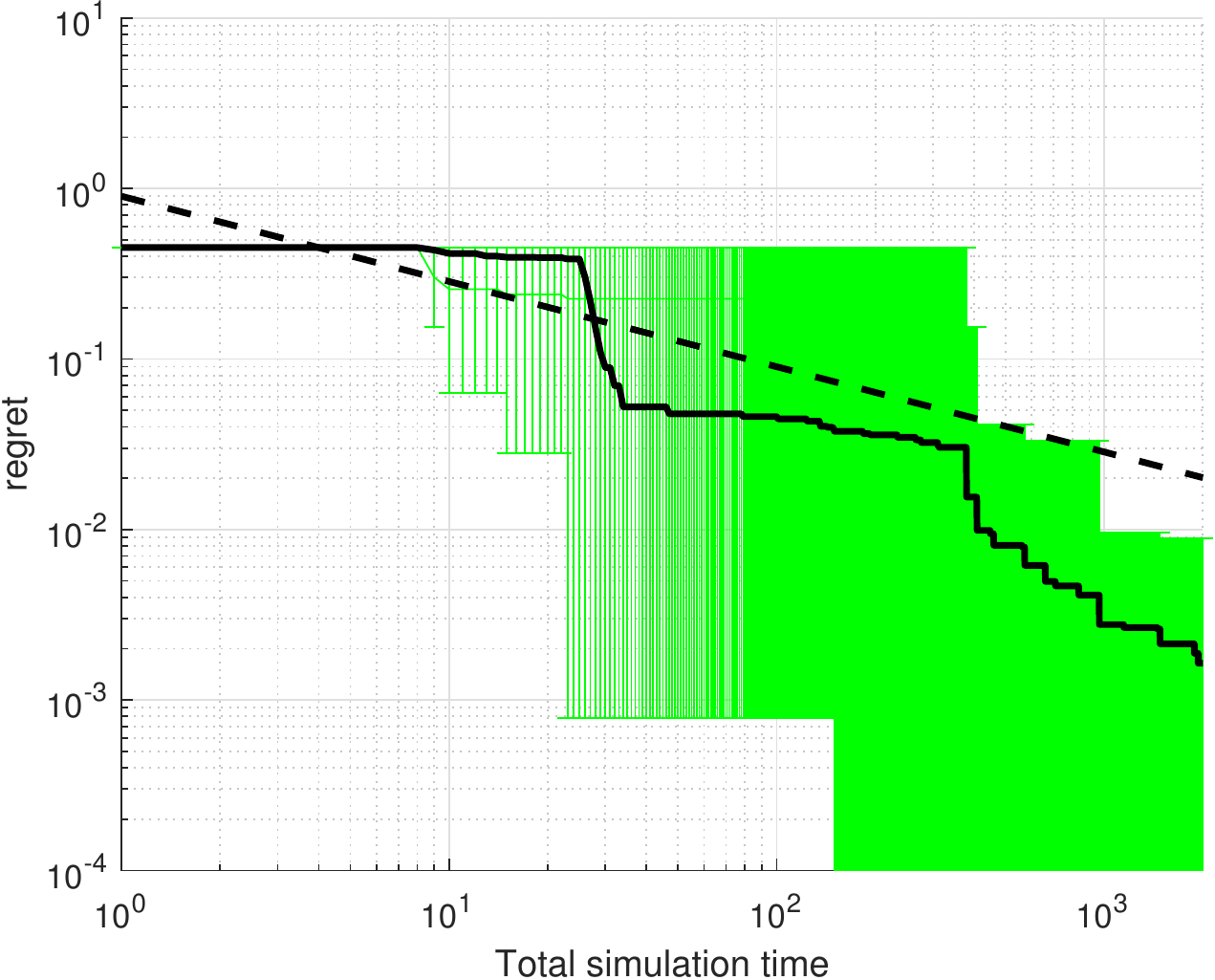}~\vskip-0.01in
    \caption{The regret function in 1D.} 
    \end{subfigure}
~\begin{subfigure}[b]{.5\textwidth}
  \centering
    \includegraphics[width=1.1\columnwidth]{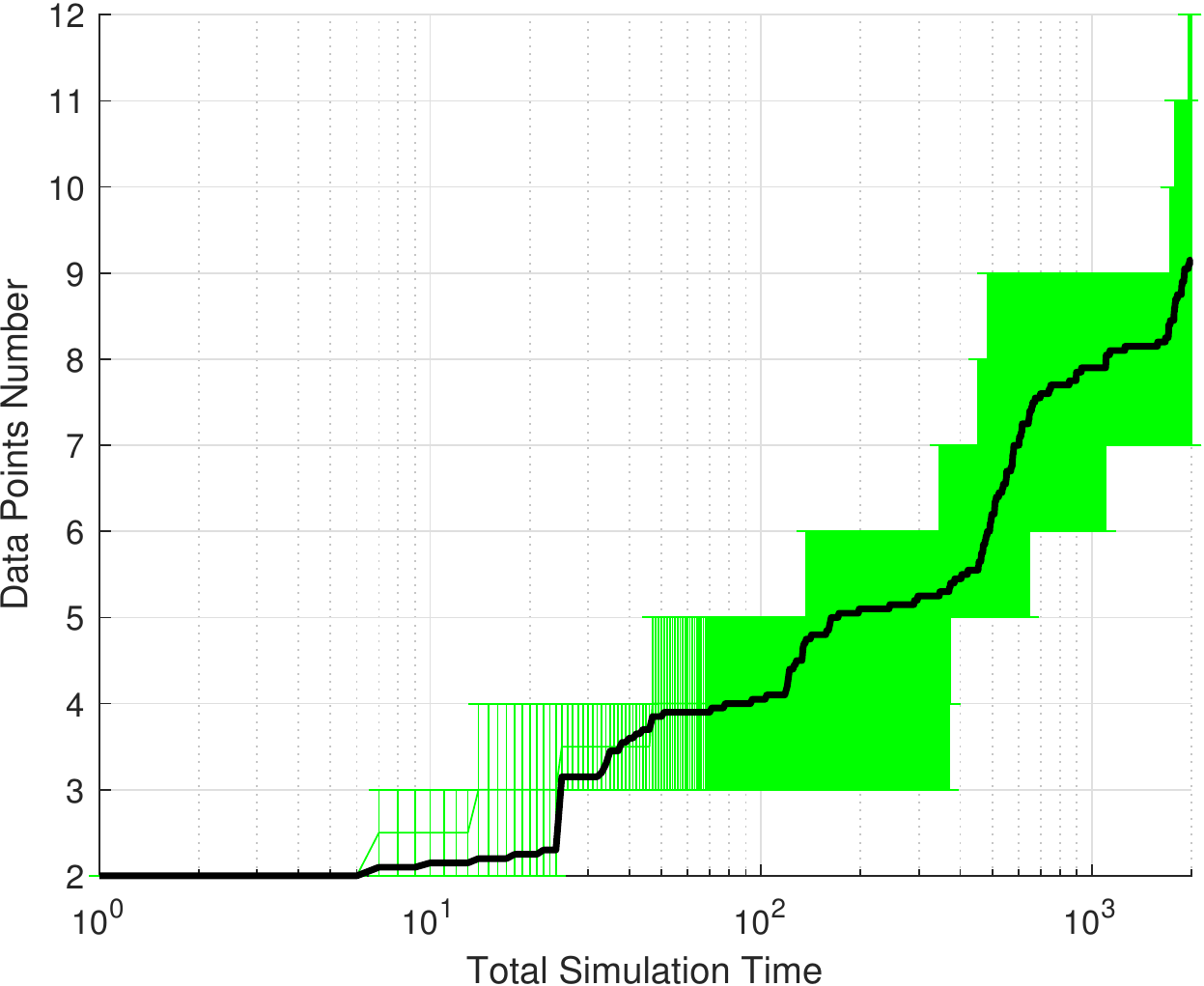}\vskip-0.01in
    \caption{Total number of datapoints in 1D.}  
     \end{subfigure}
     }
 \centerline{
\begin{subfigure}[b]{.5\textwidth}
    \centering
        \includegraphics[width=1.1\columnwidth]{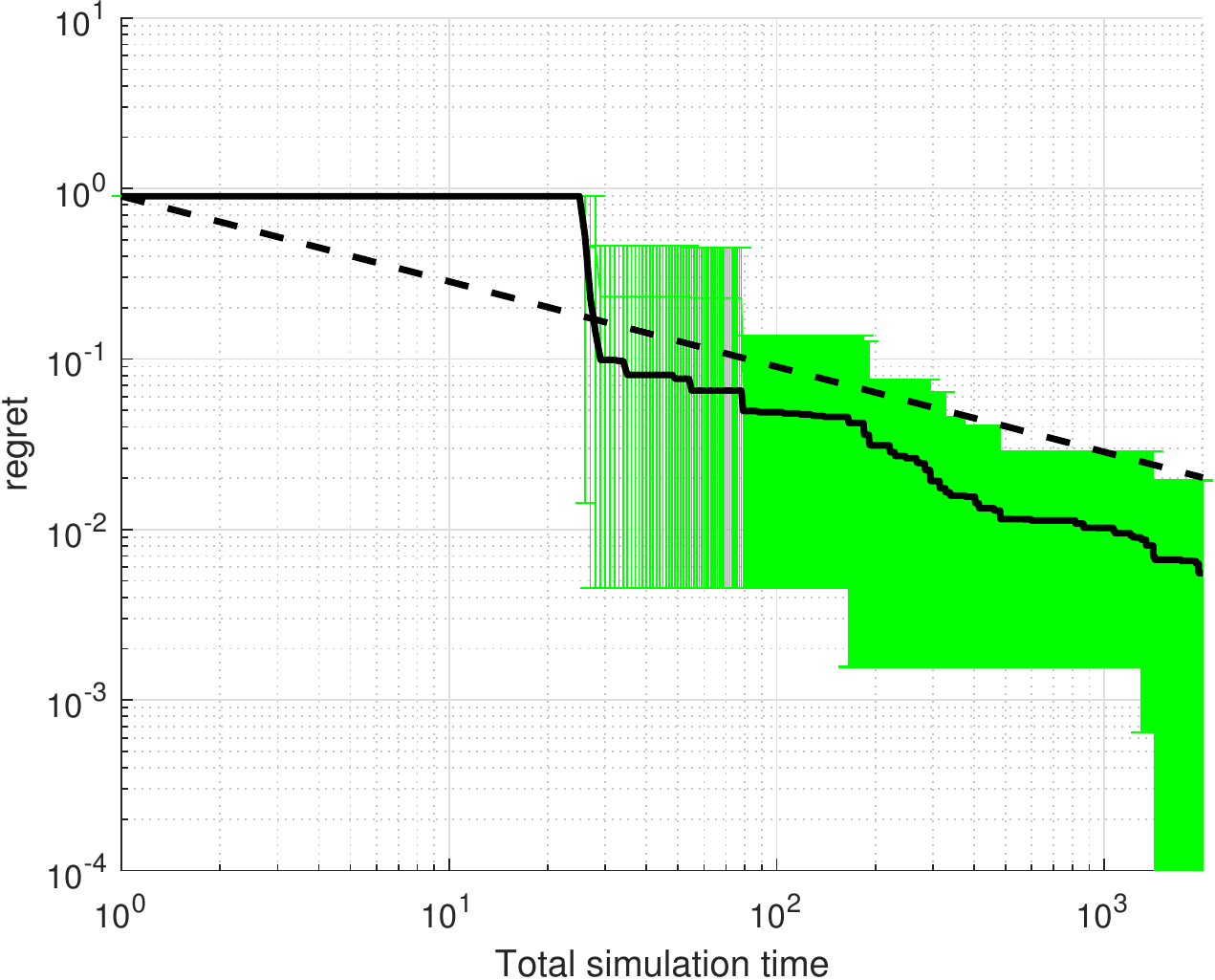}~\vskip-0.01in
    \caption{The regret function in 2D.} 
    \end{subfigure}
~\begin{subfigure}[b]{.5\textwidth}
  \centering
    \includegraphics[width=1.1\columnwidth]{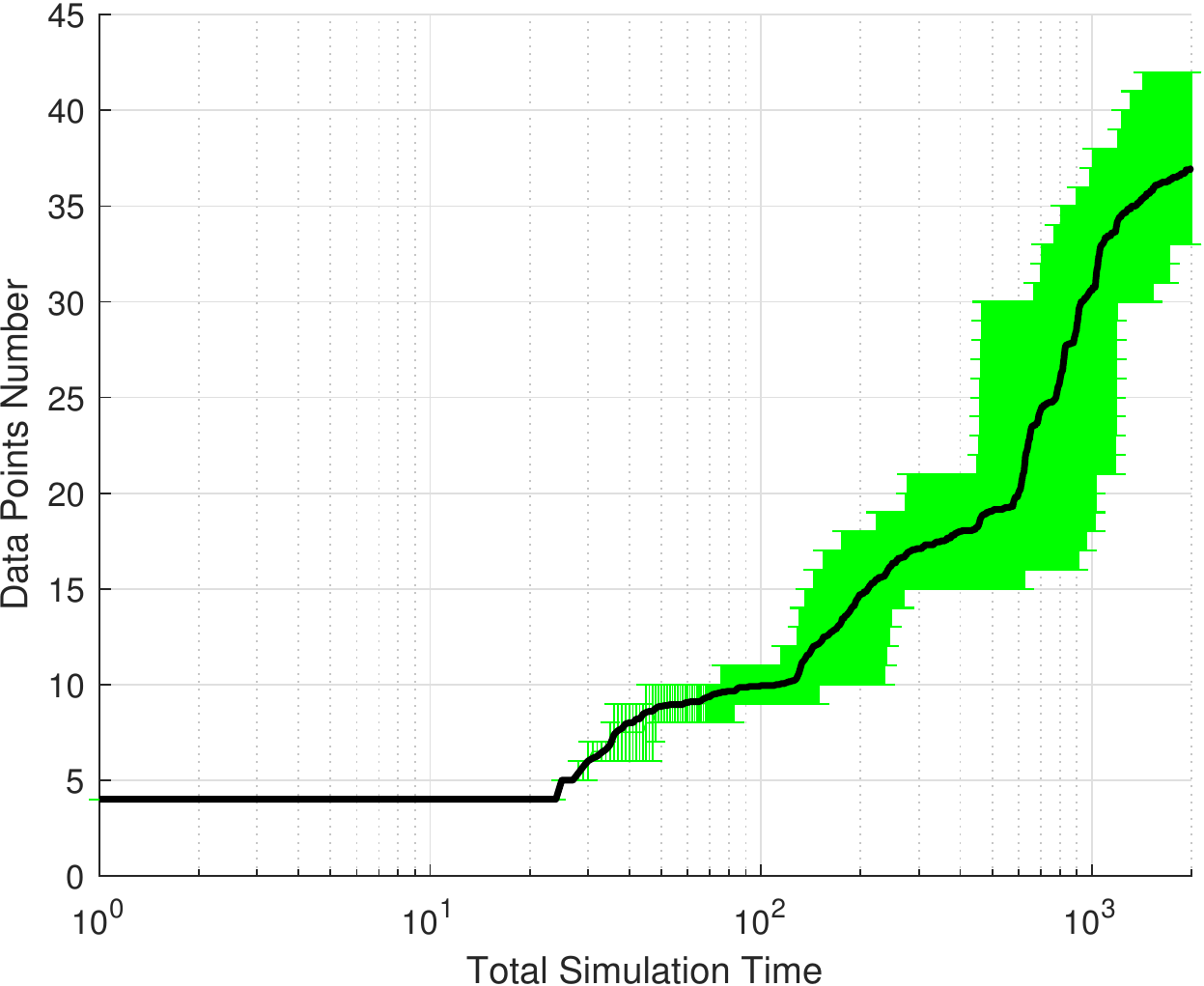}\vskip-0.01in
    \caption{Total number of datapoints in 2D.}  \label{fig:p2d}
     \end{subfigure}
     }
 \centerline{
\begin{subfigure}[b]{.5\textwidth}
    \centering
        \includegraphics[width=1.1\columnwidth]{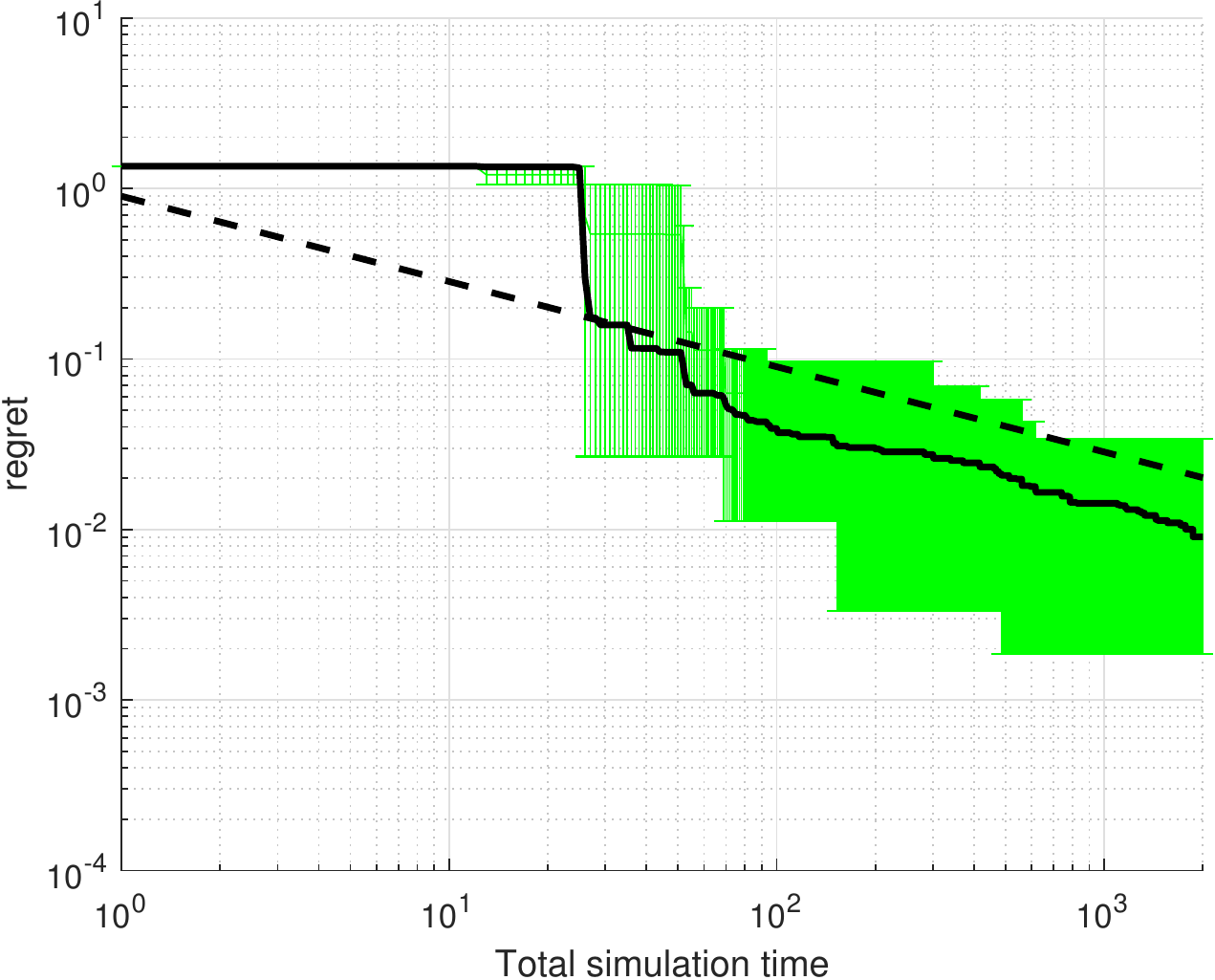}\vskip-0.01in
    \caption{The regret function in 3D.} 
    \end{subfigure}
~\begin{subfigure}[b]{.5\textwidth}
  \centering
    \includegraphics[width=1.1\columnwidth]{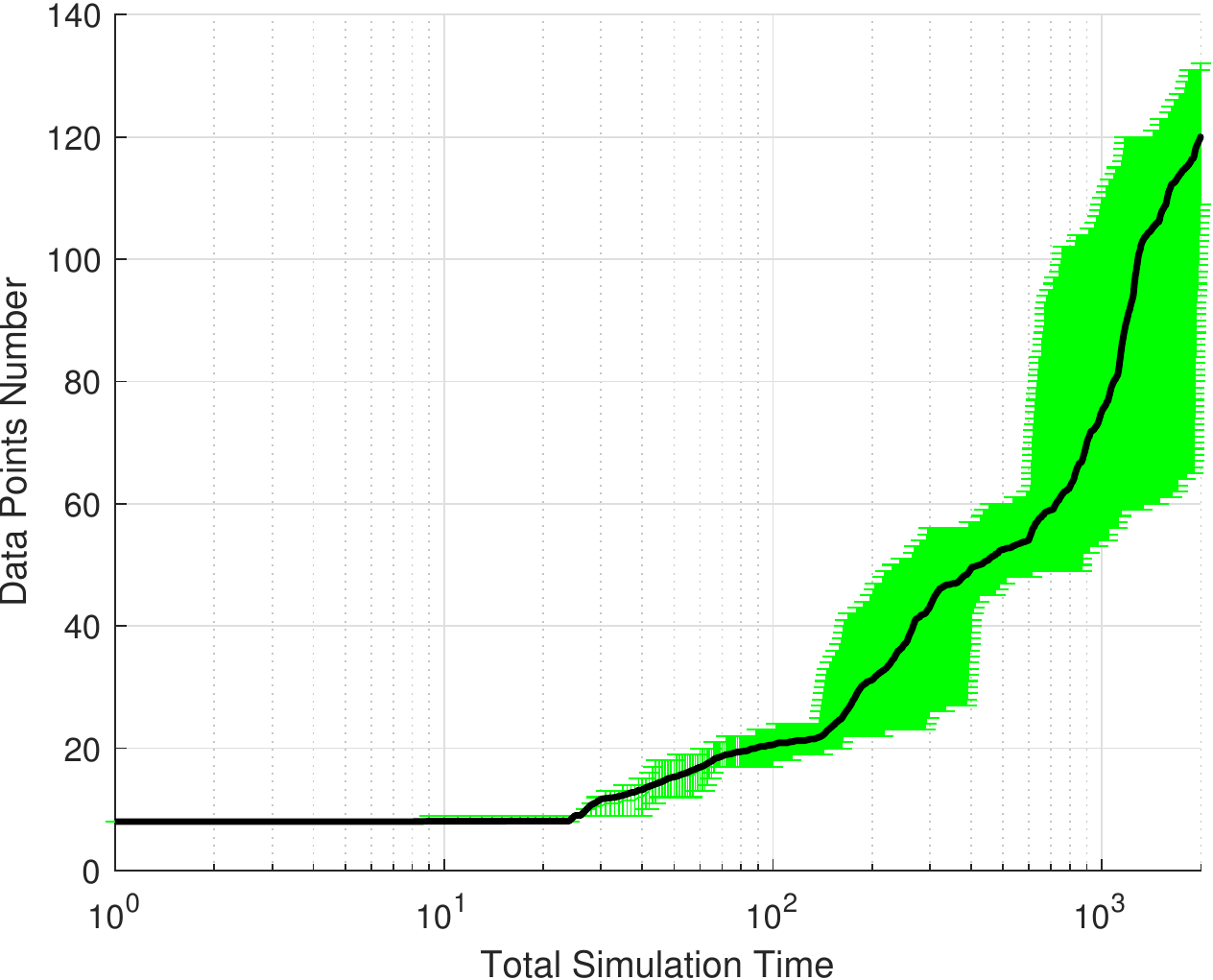}\vskip-0.01in
    \caption{Total number of datapoints in 3D.}  
     \end{subfigure}
     }
   \caption{Implementation of Algorithm \ref{algorithm:alphadogs} on the stochastically-obscured parabolic test problem \eqref{eq:parab}, for twenty different runs.  Left figure shows the mean, min and max value of the regression function over the ensembles. Also plotted at left (dashed bold) is the reference error. }
 \label{fig:parabhis}
\end{figure*}

\begin{figure*}
\centerline{
\begin{subfigure}[b]{.5\textwidth}
    \centering
        \includegraphics[width=1.1\columnwidth]{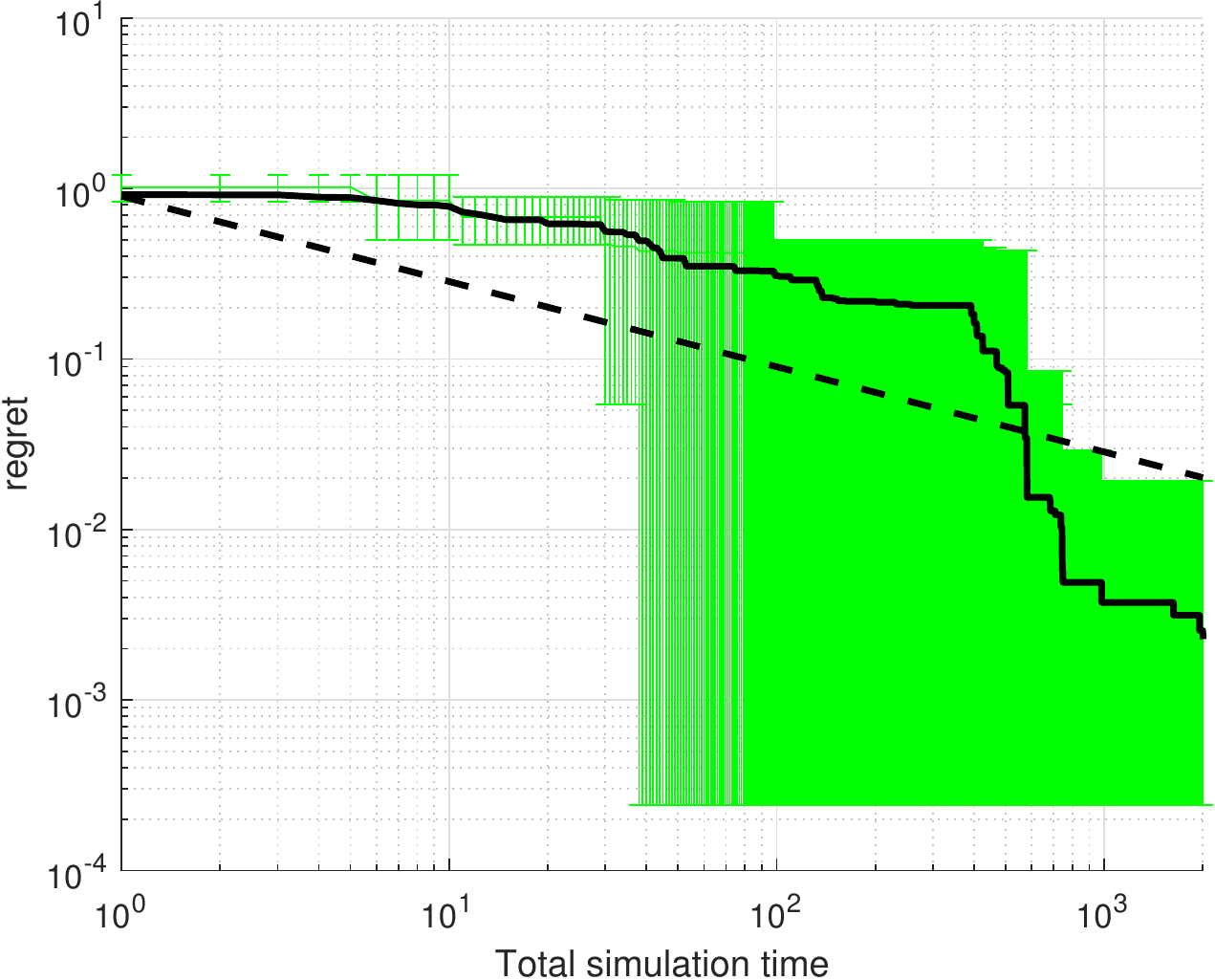}\vskip-0.01in
    \caption{The regret function in 1D.} 
    \end{subfigure}
~\begin{subfigure}[b]{.5\textwidth}
  \centering
    \includegraphics[width=1.1\columnwidth]{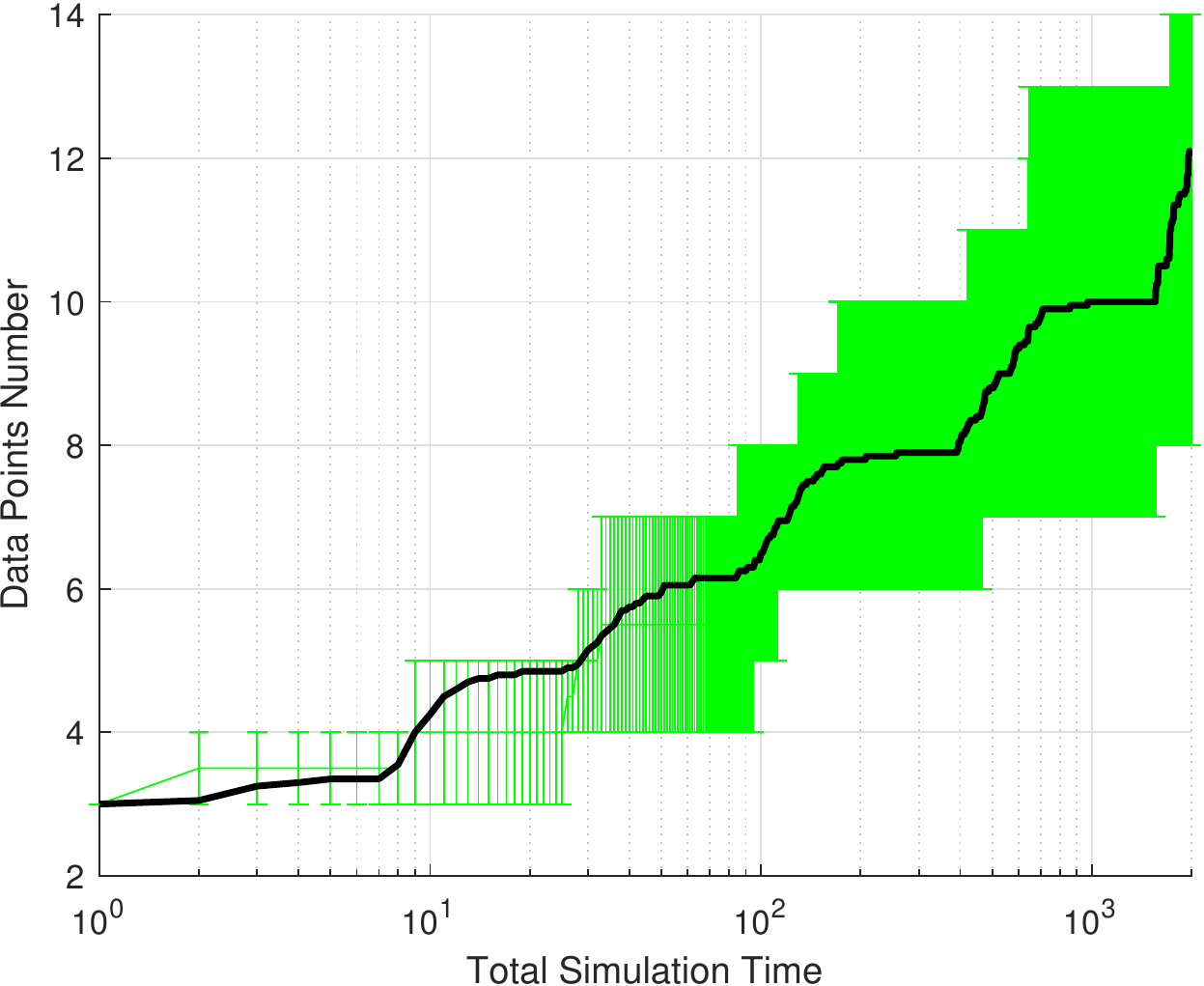}\vskip-0.01in
    \caption{Total number of datapoints in 1D.}  
     \end{subfigure}
     }
 \centerline{
\begin{subfigure}[b]{.5\textwidth}
    \centering
        \includegraphics[width=1.1\columnwidth]{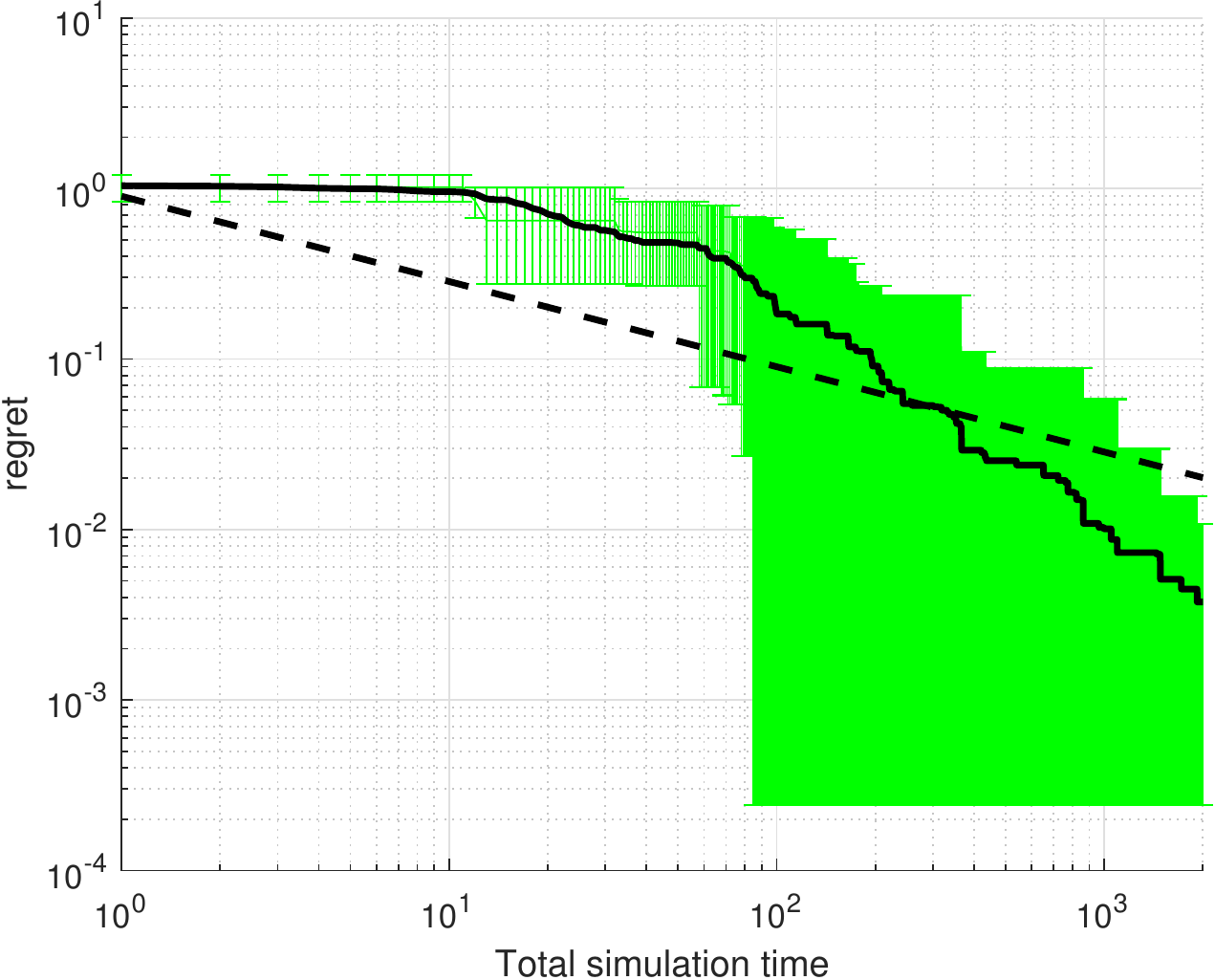}\vskip-0.01in
    \caption{The regret function in 2D.} 
    \end{subfigure}
~\begin{subfigure}[b]{.5\textwidth}
  \centering
    \includegraphics[width=1.1\columnwidth]{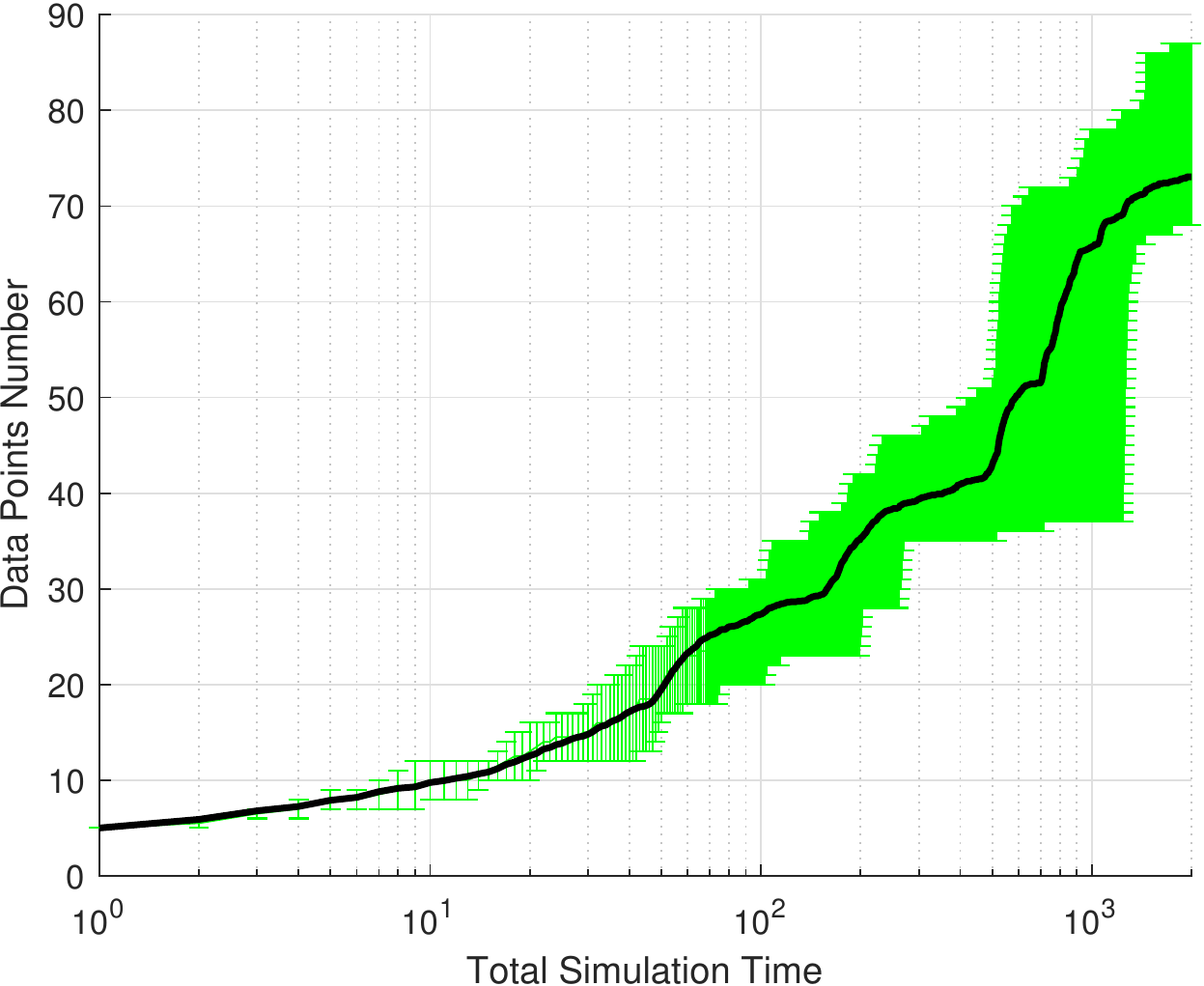}\vskip-0.01in
    \caption{Total number of datapoints in 2D.}  
     \end{subfigure}
     }
 \centerline{
\begin{subfigure}[b]{.5\textwidth}
    \centering
        \includegraphics[width=1.1\columnwidth]{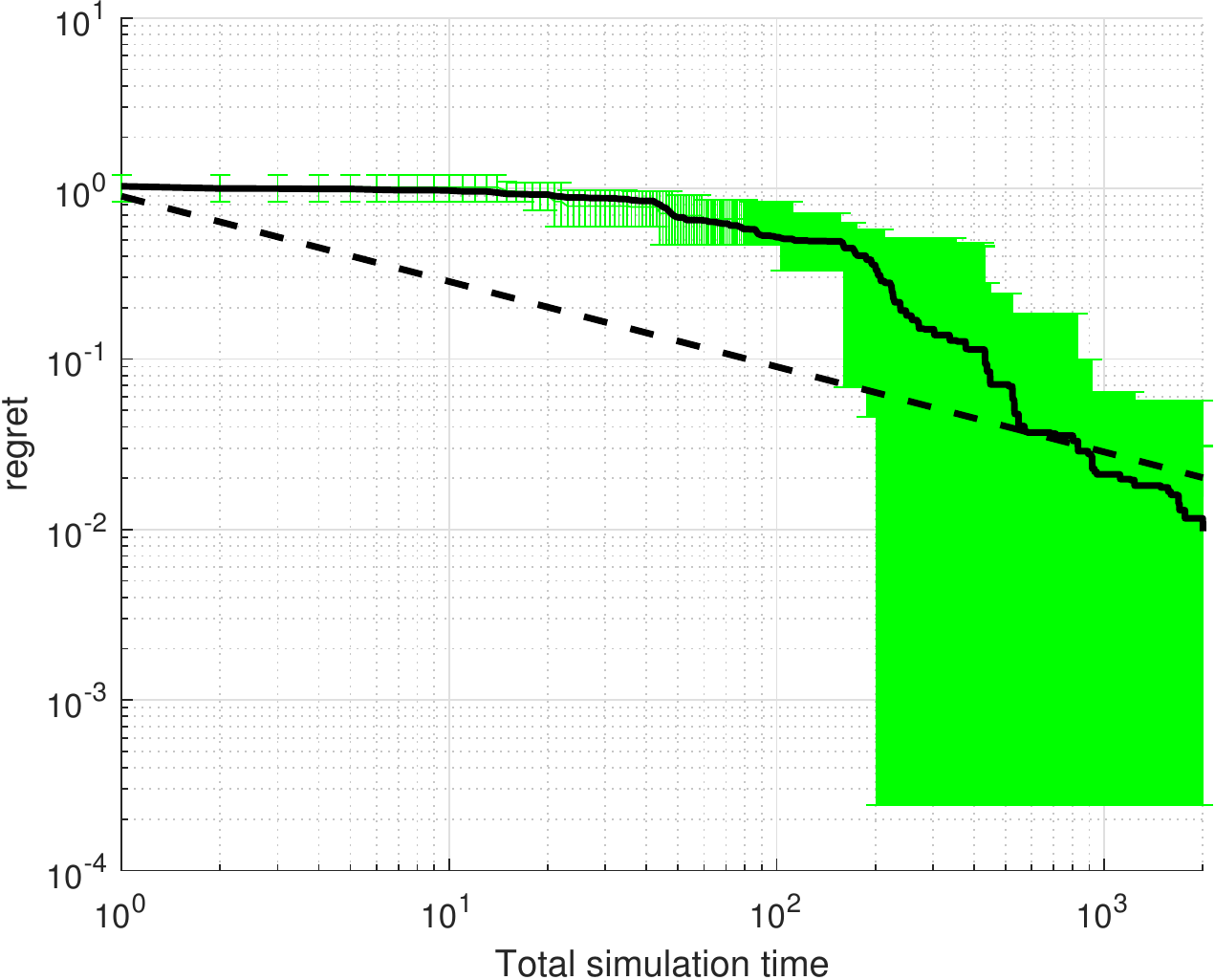}\vskip-0.01in
    \caption{The regret function in 3D.} 
    \end{subfigure}
~\begin{subfigure}[b]{.5\textwidth}
  \centering
    \includegraphics[width=1.1\columnwidth]{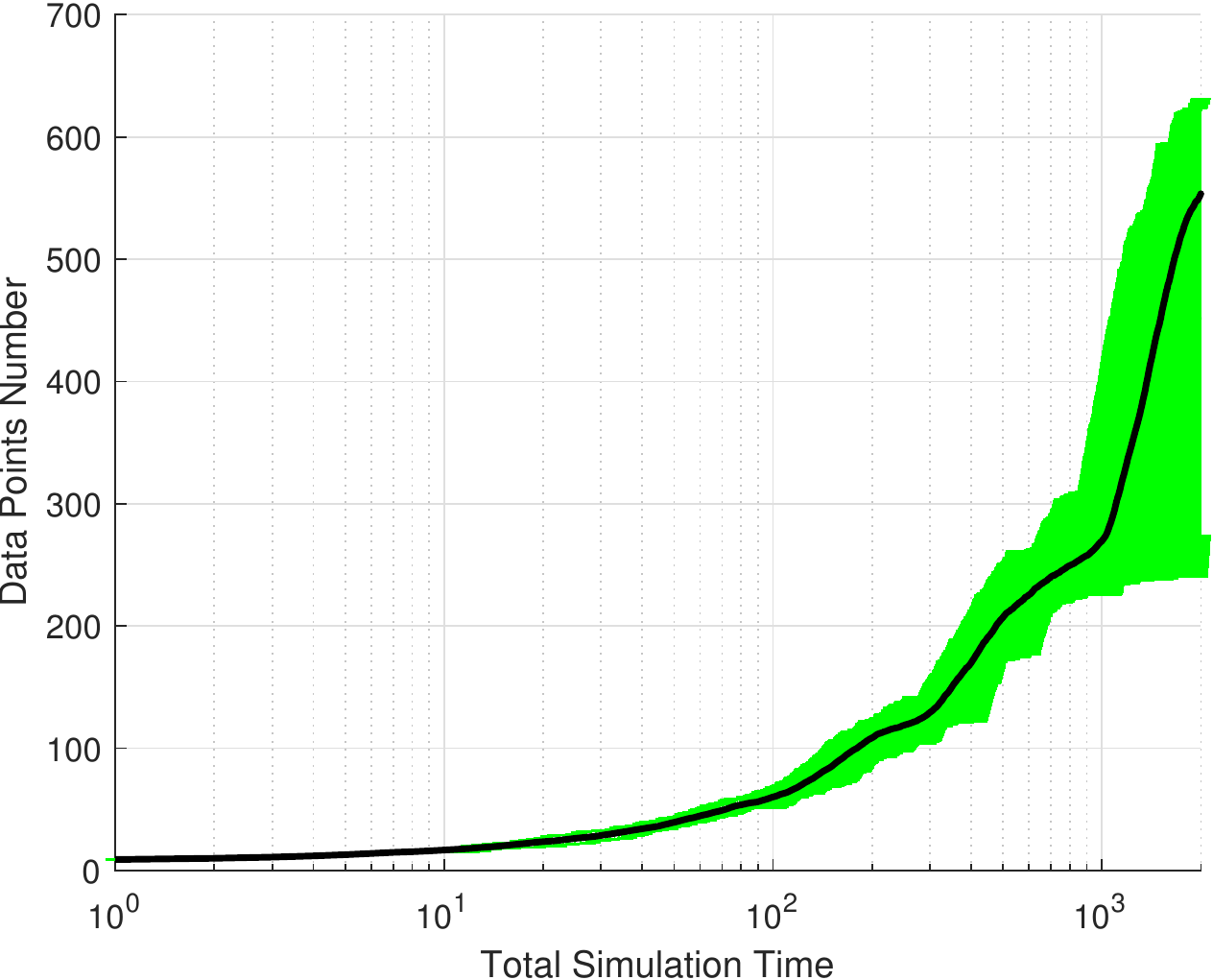}\vskip-0.01in
    \caption{Total number of datapoints in 3D.}  
     \end{subfigure}
     }
   \caption{Implementation of Algorithm \ref{algorithm:alphadogs} on the stochastically-obscured Schwefel test problem \eqref{eq:Schwefel}, for twenty different runs.  Left figure shows the mean, min and max value of the regression function over the ensembles. Also plotted at left (dashed bold) is the reference error.
   }
 \label{fig:shwefelbhis}
\end{figure*}



\section{Application of $\alpha$-DOGS to estimate the parameters of a Lorenz system }\label{sec:result6}

In this section, 
Algorithm \ref{algorithm:deltadogs} ($\Delta$-DOGS),
Algorithm \ref{algorithm:alphadogs} ($\alpha$-DOGS), 
and the Surrogate Management Framework (SMF) developed in \cite{booker-1999} and implemented in \cite{mardsen-2004} are applied to a representative optimization problem, based on infinite-time-averaged statistics, of the type considered in this paper [see \eqref{eq:ProblemStatement}].

The specific problem considered here is derived from the well-known 3-state Lorenz model \cite{david-2016, stewart2000mathematics}, the dynamics of which exhibit a familiar chaotic behavior that roughly characterizes the bulk flow of a fluid within a hollow torus that is heated from below and cooled from above \cite{bewley-2001}, and is governed by
\begin{subequations} \label{eq:lorenzsystem6}
\begin{eqnarray}
  \frac{d}{dt}X &=& s\, (Y-X), \label{eq:X6}\\
  \frac{d}{dt}Y &=& -XZ + \rho\, X -Y, \label{eq:Y6}\\
  \frac{d}{dt} Z &=& XY - \beta\, Z, \label{eq:Z6}
\end{eqnarray}
\end{subequations}
where $(X,Y,Z)$ are the components of the state, which generally moves along a chaotic attractor, and $(\rho,\beta,s)$ are the three (constant) adjustable parameters which affect various characteristics of this attractor. 
The infinite-time-averaged mean and standard deviation of the $Z$ component of this ergotic system are given by:
\begin{subequations}
    \begin{equation}\label{eq:Zavg}
        \bar{Z} = \lim_{T \rightarrow \infty} \frac{1}{T}\int_{t=0}^{T} \, (Z(t)) \, dt,
    \end{equation}
      \begin{equation}\label{eq:Zstd}
        \hat{Z} = \lim_{T \rightarrow \infty} \sqrt{\frac{1}{T} \, \int_{t=0}^{T} \, (Z(t)-\bar{Z})^2 \, dt}.
    \end{equation}
\end{subequations}
In the optimization problem considered in this section, the value of $s=10$ is taken as known, and the parameters $\rho$ and $\beta$ are considered as optimization variables.  Using the method developed in this paper (which is based on successive finite-time simulations), we will seek the values of $\rho$ and $\beta$ which reproduce known values of $\bar{Z}$ and $\hat{Z}$ in the infinite time averaged statistics of the Lorenz system \eqref{eq:lorenzsystem6}.  Towards this end, the cost function considered in this section is
\begin{equation}\label{eq:lorobj}
f(x) = \abs{(\bar{Z} - 23.57)} +\abs {(\hat{Z}- 8.67)}, \quad x = (\rho, \beta).
\end{equation}
Note that the value of $\hat{Z}$ and $\bar{Z}$ are functions of $\{\rho, \beta\}$.
The search domain for $\{\rho, \beta \}$ is taken as $24 \le \rho \le 29.15$ and $1.8 \le \beta \le 4$; note that the Lorenz system \eqref{eq:lorenzsystem6} exhibits a statistically stationary ergodic behavior everywhere 
within this search domain \cite{david-2016}. The optimal solution to this optimization problem is known to be approximately $\rho=28$, $\beta=2.667$.

The cost function given in \eqref{eq:lorobj} might initially appear to be in a slightly different form than that given in \eqref{eq:infinitecost}.  However, it has the same essential structure, in that $f(x)$ can be approximated with increasing accuracy by increasing the sampling (as well as the computational cost) of any given measurement.  As a result, Algorithm \eqref{algorithm:alphadogs} can be applied to this problem directly, given a sufficiently representative uncertainty quantification (UQ) procedure for the finite-time-averaged approximations of the infinite-time-averaged statistics of interest. 
In this work, for the purpose of illustration, we will use the simple UQ approach proposed in Appendix B, which proves to be adequate for our purposes here; improved UQ approaches developed and discussed elsewhere could certainly be used instead.

To numerically simulate the ODE given in \eqref{eq:lorenzsystem6}, 
we use a standard RK4 method with a uniform timestep of $h=0.05$; this approach provides a reasonably small time discretization error~\cite{oliver-2014, david-2016} for this problem.
[Using the same timestep $h$ in all simulations during the optimization process is a drawback of the $\alpha$-DOGS optimization algorithm as developed in this paper; this limitation will be addressed in a future paper, which is currently under development.]


Initial conditions near the attractor are taken for all simulations, and (for simplicity) the first 2600 timesteps (up to $T=13$) are deleted from all simulations, in order to begin time averaging after the system has closely approached the attractor itself.  [The interesting problem of automating the detection of such ``initial transients'', during which the chaotic system approaches the attractor, is also deferred to a future paper.]

Note that all optimizations performed in this section are terminated when 
\begin{gather} 
\abs{\hat{f}(x,T)-f(x^*)} \le  0.04 \quad \textrm{and} \label{eq:teminationconditionmeasure} \\
\sigma(x,T) \le  0.02, \label{eq:teminationconditionsigma}
\end{gather}
where $\hat{f}(x,T)$ and $\sigma(x,T)$ are the estimates and uncertainty at point $x$, and $f(x^*)=0$ is the global minimum of $f(x)$. 
  
According to the procedure developed in Appendix B, $T=2513$ (502600 timesteps) is the minimum simulation time required to achieve the target uncertainty in \eqref{eq:teminationconditionsigma}. As a result, all simulations of $\Delta$-DOGS and SMF use fixed time-averaging lengths of $T= 2513$.  In contrast, the time averaging length used by $\alpha$-DOGS for each datapoint computed during the optimization process is controlled by the optimizer itself, and depends on two parameters:
\begin{itemize}
\item [a.] The averaging length during identifying sampling iterations, which is denoted by $T_0 = 20$ (4000 timesteps). Note that the first $2513$ timesteps are not included in time-averaging process, as they are in the startup transient.
\item[b.] The additional averaging length during supplemental sampling iterations, which is denoted by $T_1 = 7$ (1400 timesteps).    
\end{itemize} 

\noindent The results of applying the three optimization algorithms considered to problem \eqref{eq:lorobj} may  summarized as follows:
\begin{itemize}
\item[a.] $\alpha$-DOGS requires greatly reduced (up to two orders of magnitude) total averaging time as compared with the other two algorithms considered (see Figure \ref{fig.lorConvergence}).
\item[b.] $\Delta$-DOGS uses fewer actual datapoints than $\alpha$-DOGS (see Figure \ref{fig.lorenz_pnts}) to achieve a desired degree of convergence; however, since the time-averaging length at all datapoint is much higher in $\Delta$-DOGS, the actual rate of convergence, in terms of computation time, is greatly improved using $\alpha$-DOGS. 
\item[c.] For $\alpha$-DOGS, the required averaging times at the datapoints which have reduced cost function values are significantly greater than the required averaging times at the datapoints that are far from the desired solution (see figure \ref{fig.yE_T}). 
As a result, the computational cost of the global exploration process during the optimization is significantly reduced using $\alpha$-DOGS. 
\end{itemize}  

\begin{figure} 
\centering
  \includegraphics[scale =0.8]{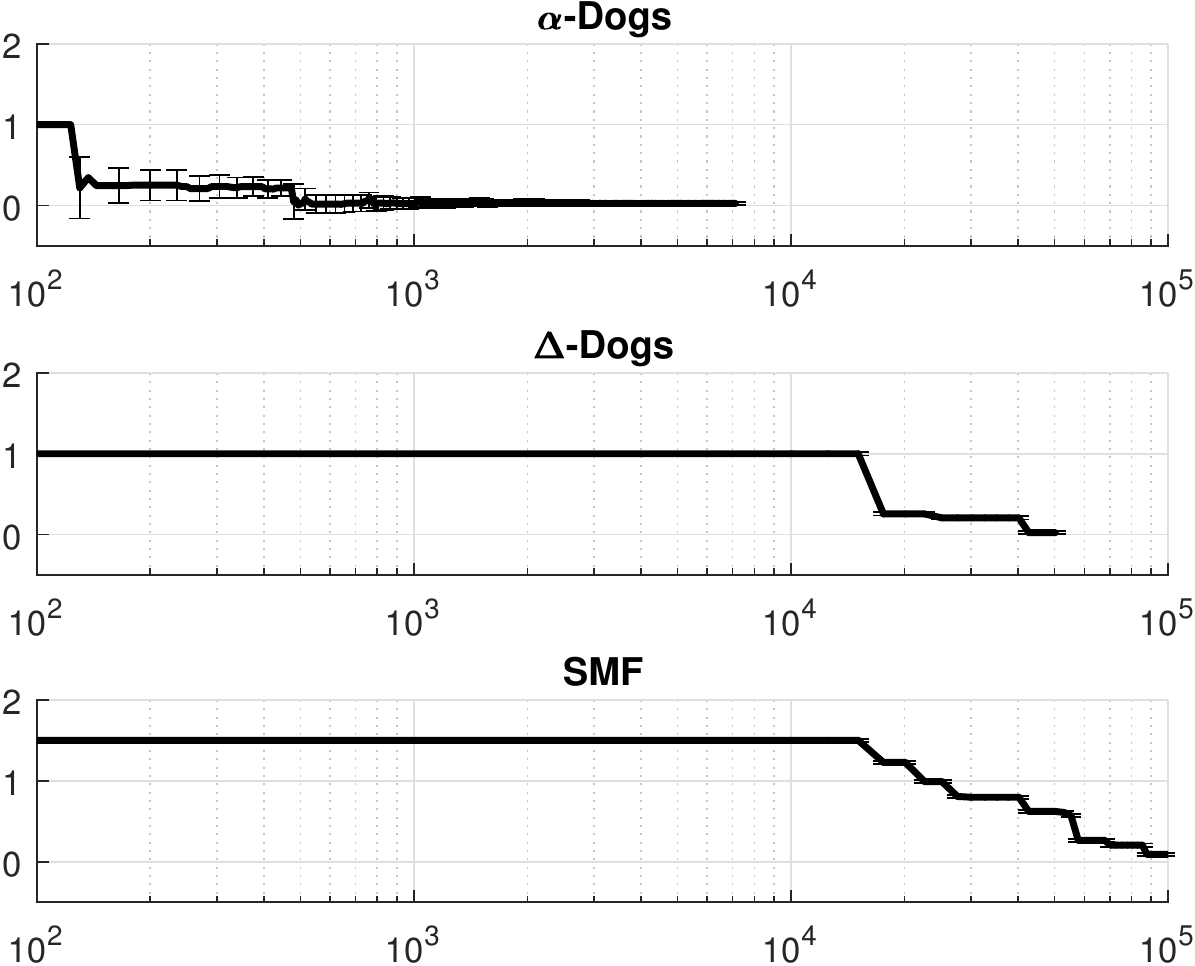}
  \caption{Best measurement vs total averaging length. Convergence history of optimization algorithm for optimization problem \eqref{eq:lorobj} based on Lorenz system.}
  \label{fig.lorConvergence}
\end{figure}
  
\begin{figure}
\centering
  \includegraphics[scale =0.8]{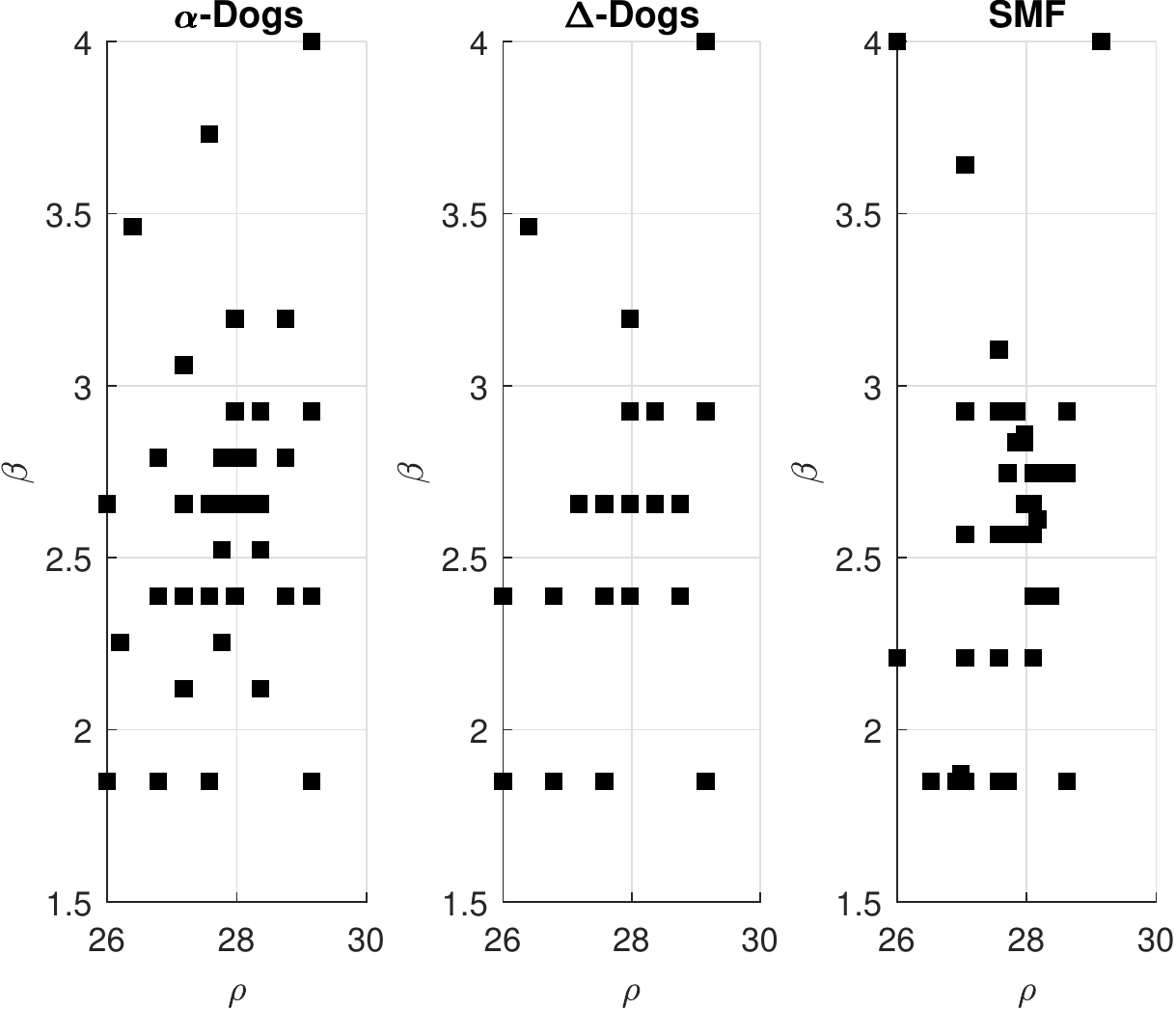}
  \caption{Location of the datapoints considered during the optimization of \eqref{eq:lorobj} based on the Lorenz system.}
  \label{fig.lorenz_pnts}
\end{figure}

\begin{figure}
\centering
  \includegraphics[scale =0.8]{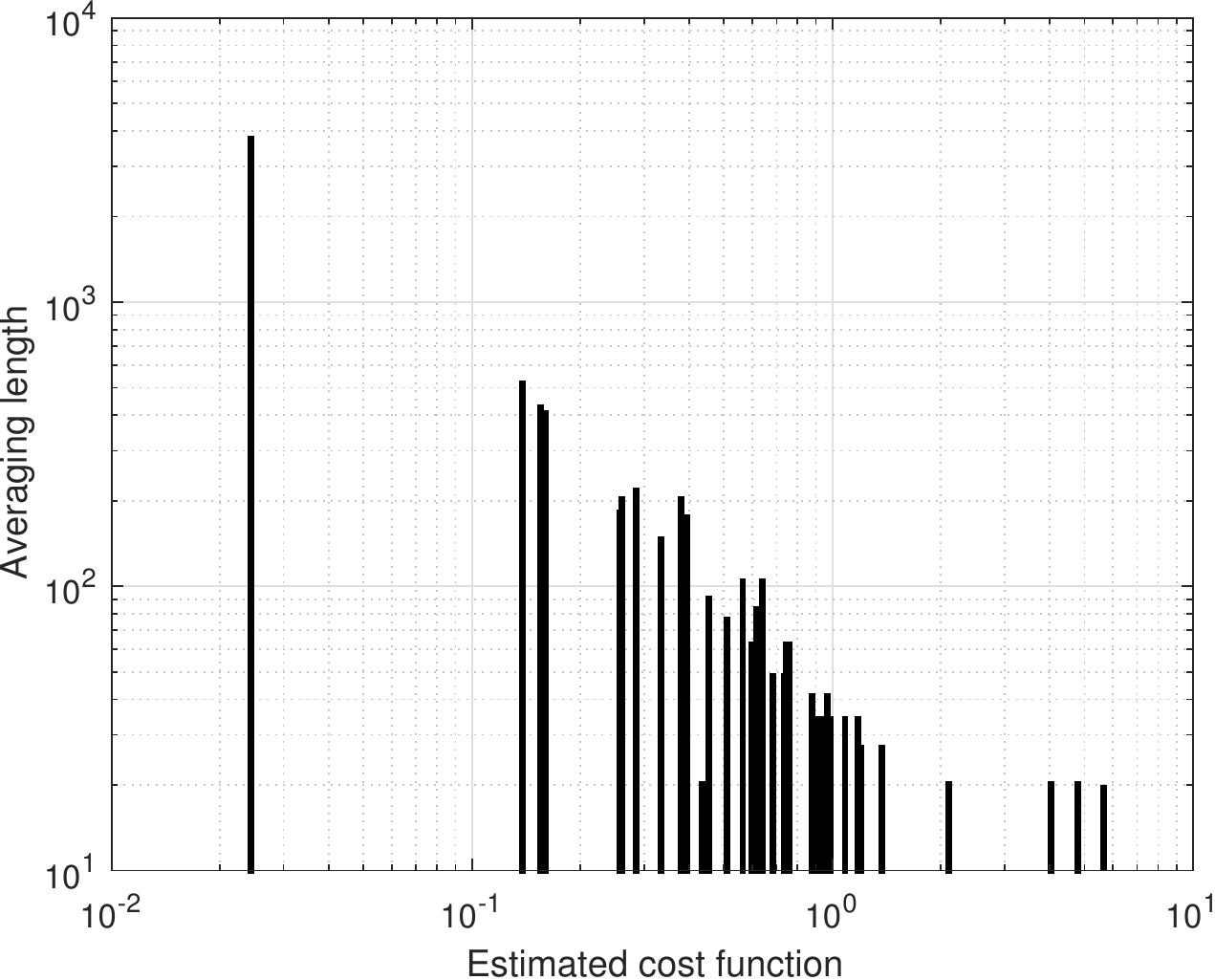}
  \caption{Relationship between the averaging length and cost function value for $\alpha$-DOGS applied on optimization problem \eqref{eq:lorobj}.}
  \label{fig.yE_T}
\end{figure}

\section{Conclusions} \label{sec:conclusion}

This paper presents a new optimization algorithm, dubbed $\alpha$-DOGS, for the minimization of functions given by the infinite-time average of stationary ergodic processes in the computational or experimental setting.
Two search functions are considered at each iteration.  The first is a continuous search functions, $s_c^k(x)$, defined over the entire feasible space $x\in L$, combining a strict regression $p^k(x)$ of the available datapoints together with a remoteness function characterizing the distance of any given point in the feasible domain from the nearest measurements, and built on the framework of a Delaunay triangulation over all available measurements at that iteration.  The second is a discrete search function, $s_d^k(x_i)$, defined over the available measurements $x_i\in S^k$. A comparison between the minima of these two search functions is made in order to decide between further sampling (and, therefore, refining) an existing measurement, or sampling at a new point in parameter space.  The method developed builds closely on the Delaunay-based Derivative-free Optimization via Global Surrogates algorithm, dubbed $\Delta$-DOGS, proposed in \cite{beyhaghi_1,beyhaghi_2,beyhaghi_3}. Convergence of the algorithm is established in problems for which
\begin{itemize}
\item[a.] The underlying truth (infinite-time averaged) function, as well as the regressions computed at each iteration $k$, are twice differentiable.
\item[b.] The stationary process $g(x,k)$ upon which the truth function $f(x)$ is generated, in \eqref{eq:infinitecost}, is ergodic, and the convergence of the averaging process to the underlying truth function is bounded by a monotonic function of a computable uncertainty function (see Assumption \ref{assumption.2}).
\item[c.] The uncertainty of the time averaging process decays exponentially to zero (see Assumption \ref{assumption.3}); this is true for almost all stationary models of random processes.
\end{itemize}

The $\alpha$-DOGS algorithm performs and refines measurements with different amounts of sampling in different locations in the feasible region of parameter space as necessary.  By doing so, the total cost of the optimization process is substantially reduced as compared with using existing derivative-free optimization strategies, with the same amount of sampling at different locations in parameter space.  Computational experiments demonstrate that the algorithm developed ultimately devotes most of its sampling time to points in parameter space near to the global minimum.  Further, these computational experiments indicate that the regret function (see Definition \ref{def:candidatepoint}) eventually diminishes to a value that is actually substantially less than the uncertainty of a single measurement, assuming that all of the sampling is done at a single point.

In future work, the $\alpha$-DOGS algorithm will be applied to additional benchmark and application-based optimization problems, including shape optimization for airfoils and hydrofoils.  For problems in which the function is determined computationally (from, e.g., numerical simulations of turbulent flows), the extension of the present framework to, as convergence is approached, simultaneously
(a) refine the computational grid, and
(b) increase the measurement sampling,
is also under development.

\section*{Appendix A: Polyharmonic spline regression} \label{splineregression}

The algorithm described in this paper depends upon a smooth regression $p^k(x)$ (see Assumption \ref{assumption.1}). 
The best technique for computing the regression is problem dependent. 
As with \cite{beyhaghi_1,beyhaghi_2,beyhaghi_3}, a key advantage of our Delaunay-based approach in the present work is that it facilitates the use of {\it any} suitable regression technique, subject to it satisfying the ``strict'' regression property given in Definition \ref{def:strict}.  Since our numerical tests all implement the polyharmonic spline regression technique, the derivation of this regression technique is briefly explained in this appendix; additional details may be found in \cite{wahba-1990}.

The polyharmonic spline regression $p(x)$ of a function $f(x)$ in $\mathbb{R}^n$ is defined as a weighted sum of a set of radial basis functions $\varphi(r)$ built around the location of each measurement point, plus a linear function of $x$:
\begin{gather}
\label{eq:polyharm}
    p(x) = \sum_{i = 1}^N w_i\,\varphi(r) + v^T
    \begin{bmatrix}
        1 \\
        x
    \end{bmatrix}, \\
    \text{where} \quad \varphi(r) = r^3 \quad \text{and} \quad r = \norm{x - x_i}. \notag
\end{gather}
The weights $w_i$ and $v_i$ represent $N$ and $n+1$ unknowns.
Assume that $\{y(x_1), y(x_2), \\ \dots, y(x_n)\}$ is the set of measurements, with standard deviations $\{\sigma_1, \sigma_2, \dots,\sigma_2 \}$. 
The $w_i$ and $v_i$ coefficients are computed by minimizing the following objective function, which expresses is a tradeoff between the fit to the observed data and
the smoothness of the regressor:
\begin{equation} \label{eq:smoothreg}
L_p(x)= \sum_{i=1}^N \Big[\frac{(p(x_i)-y(x_i))}{\sigma_i}\Big]^2 +\lambda \int_{B} {\abs{\nabla^m p(x)}},
\end{equation}
where $B$ is a large box domain containing all of the $x_i$, and $\nabla^m p(x)$ is the vector including all $m$ derivatives of $p(x)$ (see \cite{duchon-1977}). 
It is shown in \cite{duchon-1977} that the first-order optimality condition for the objective function \eqref{eq:smoothreg} is as follows:
\begin{equation} \label{eq:1stordercond}
p(x_i)-y(x_i)+ \rho \,\sigma_i^2 w_i=0, \quad \forall 1 \le i \le N,  
\end{equation}
where $\rho$ is a parameter proportional to $\lambda$. In summary, the coefficient of the regression can be derived by solving:
    \begin{gather}
        \begin{bmatrix}
            F & V^T \\
            V & 0
        \end{bmatrix}
        \begin{bmatrix}
            w \\
            v
        \end{bmatrix}=
        \begin{bmatrix}
            f(x_i) \\
            0
        \end{bmatrix}, \label{eq:wvrhoeq} \\
        F_{ij} = \varphi(\norm{x_i-x_j}) +\rho \delta_{i,j} \,\sigma_i^2, \quad  \quad 
        V =
        \begin{bmatrix}
            1   & 1   & \dots & 1   \\
            x_1 & x_2 & \dots & x_N
        \end{bmatrix}, \notag
    \end{gather}
where $\delta_{i,j}$ is the Kronecker delta. 

The problem which is left to solve when computing the regression is to find an appropriate value of $\rho \in [0, \infty )$.
Solving \eqref{eq:wvrhoeq} for any value of $\rho$ gives a unique regression, denoted $p(x,\rho)$.  
The parameter $\rho$ is then obtained by a predictive mean-square error criteria developed in \S 4.4 in \cite{wahba-1990}, which is given by imposing the following condition:
\begin{equation}\label{eq:rhoeq}
T(\rho)=\sum_{i=1}^N [\frac{p(x_i,\rho)-y(x_i)}{\sigma_i}]^2=1.
\end{equation}

For $\rho\rightarrow\infty$, $w_i\rightarrow 0$, and the solution of \eqref{eq:wvrhoeq} is a weighted mean-square linear regression,
which is obtained by solving \eqref{eq:rhoeq}. 
If $T(\infty) \le 1$, we take this linear regression as the best current regression for the available data.
Otherwise, we have $T(\infty)>1$ and (by construction) $T(0)=0$; thus, \eqref{eq:rhoeq} has a solution with finite $\rho>0$, which gives the desired regression.

If $T(\infty) > 1$, we thus seek a $\rho$ for which for $T(\rho)=1$. 
Following \cite{wahba-1990}, using \eqref{eq:wvrhoeq}, \eqref{eq:rhoeq} simplifies to:
\begin{equation}
T(\rho)=\rho^2 (\sum_{i=1}^N w_{i}) \,\sigma_i)^2 =1,
\end{equation}\label{eq:Trho}
where $w_{i, \rho}$ is the $w_i$ which is obtained by solving \eqref{eq:wvrhoeq}.
Define $Dw$ and $Dv$ as the vectors whose $i$-th elements are the derivatives of $w_i$ and $v_i$ with respect to $\rho$, then
\begin{gather*}
T'(\rho)= \rho^2 \sum_{i=1}^N w_i \, Dw_i \sigma_i^2+ 2\, \rho (\sum_{i=1}^N w_{i, \rho} \sigma_i)^2, \\
        \begin{bmatrix}
            F & V^T \\
            V & 0
        \end{bmatrix}
        \begin{bmatrix}
            Dw \\
            Dv
        \end{bmatrix}+ 
         \begin{bmatrix}
            \rho \Sigma_2 & 0 \\
            0 & 0
        \end{bmatrix}
        \begin{bmatrix}
            w \\
            v
        \end{bmatrix}=
        \begin{bmatrix}
            0 \\
            0
        \end{bmatrix} \notag,
\end{gather*}
where $\Sigma_2$ is a diagonal matrix whose $i$-the diagonal element is $\rho\,\sigma_i^2$.
Therefore, the analytic expression for the derivative of $T(\rho)$ is available. 
Thus, \eqref{eq:rhoeq} can be solved quickly using Newton's method.  

The regression process presented here, imposing \eqref{eq:Trho} as suggested by \cite{wahba-1990}, is designed to obtain a regression which is reasonably smooth.  However, there is no guarantee that this particular regression satisfies the strictness property required in the present work (see Definition \ref{def:strict}). 
Note, however, that by imposing $\rho=0$, the regression is made strict for arbitrary small $\beta$. 
Thus, to satisfy strictness for a given finite $\beta$, the value of $\rho$ must sometimes be decreased from that which satisfies \eqref{eq:Trho}, as necessary.

\section*{Appendix B: UQ for finite-time-averaging of the Lorenz system} \label{UQ}

This appendix summarizes briefly the simple empirical approach that is used in this paper to quantify the uncertainty of the cost function \eqref{eq:lorobj} when it is estimated using a finite time average.

In the method used, we simply simulated the Lorenz system \eqref{eq:lorenzsystem6} 30 independent times with various initial values for $(X,Y,Z)$.  The cost function was then approximated using these different simulation lengths, and the standard deviation of the estimations obtained using various simulation lengths was calculated. 
The simple model given by ${A}/{\sqrt{T}}$ for the uncertainty was found to fit this empirical calculation quite well, as shown in Figure \ref{fig.UQ}.   

\begin{figure}
\centering
  \includegraphics[scale =0.8]{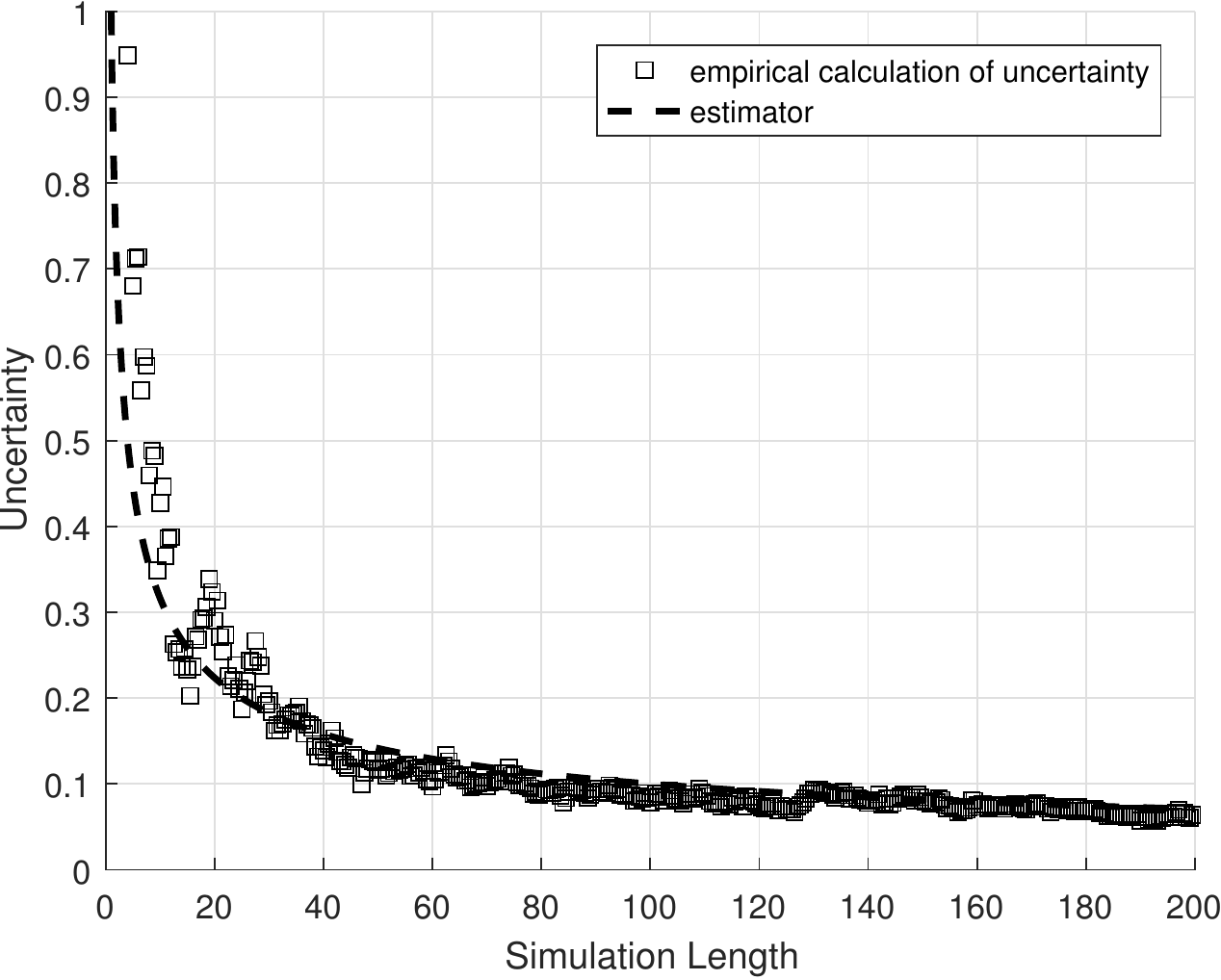}
  \caption{Uncertainty Quantification (UQ) for finite-time-average approximations of the inifinite-time-average statistic of interest in the cost function related to the Lorenz model. 
  The uncertainty quantification model of ${A}/{\sqrt{T}}$ is found to give a very good empirical fit.}
  \label{fig.UQ}
\end{figure}

\section*{Acknowledgment}


We gratefully acknowledge Prof. Phillip Gill and Prof. Alison Marsden for their collaborations and funding from AFOSR FA 9550-12-1-0046, Cymer Center for Control Systems \& Dynamics, and Leidos corporation in support of this work.

\bibliographystyle{plain}
\bibliography{ref}

\begin{thebibliography}{10}

\bibitem{alimo2017delaunay}
Shahrouz Alimo, Pooriya Beyhaghi, Gianluca Meneghello, and Thomas Bewley.
\newblock Delaunay-based optimization in cfd leveraging multivariate adaptive
  polyharmonic splines (maps).
\newblock In {\em 58th AIAA/ASCE/AHS/ASC Structures, Structural Dynamics, and
  Materials Conference}, page 0129, 2017.

\bibitem{alimo2017optimization}
Shahrouz~Ryan Alimo, Pooriya Beyhaghi, and Thomas~R Bewley.
\newblock Optimization combining derivative-free global exploration with
  derivative-based local refinement.
\newblock In {\em 2017 IEEE 56th Annual Conference on Decision and Control
  (CDC)}, pages 2531--2538. IEEE, 2017.

\bibitem{amaioua2018efficient}
Nadir Amaioua, Charles Audet, Andrew~R Conn, and S{\'e}bastien Le~Digabel.
\newblock Efficient solution of quadratically constrained quadratic subproblems
  within the mesh adaptive direct search algorithm.
\newblock {\em European Journal of Operational Research}, 268(1):13--24, 2018.

\bibitem{audet2018progressive}
Charles Audet, Andrew~R Conn, S{\'e}bastien Le~Digabel, and Mathilde Peyrega.
\newblock A progressive barrier derivative-free trust-region algorithm for
  constrained optimization.
\newblock {\em Computational Optimization and Applications}, 71(2):307--329,
  2018.

\bibitem{audet2017derivative}
Charles Audet and Warren Hare.
\newblock {\em Derivative-free and blackbox optimization}.
\newblock Springer, 2017.

\bibitem{audet2018mesh}
Charles Audet and Christophe Tribes.
\newblock Mesh-based nelder--mead algorithm for inequality constrained
  optimization.
\newblock {\em Computational Optimization and Applications}, 71(2):331--352,
  2018.

\bibitem{awad-2006}
Hernan~P Awad and Peter~W Glynn.
\newblock On an initial transient deletion rule with rigorous theoretical
  support.
\newblock In {\em Proceedings of the 38th conference on Winter simulation},
  pages 186--191. Winter Simulation Conference, 2006.

\bibitem{beran1994}
Jan Beran.
\newblock {\em Statistics for long-memory processes}, volume~61.
\newblock CRC Press, 1994.

\bibitem{beran1995}
Jan Beran.
\newblock Maximum likelihood estimation of the differencing parameter for
  invertible short and long memory autoregressive integrated moving average
  models.
\newblock {\em Journal of the Royal Statistical Society. Series B
  (Methodological)}, pages 659--672, 1995.

\bibitem{bewley-2001}
Thomas~R Bewley, Parviz Moin, and Roger Temam.
\newblock Dns-based predictive control of turbulence: an optimal benchmark for
  feedback algorithms.
\newblock {\em Journal of Fluid Mechanics}, 447:179--225, 2001.

\bibitem{beyhaghi2018uncertainty}
Pooriya Beyhaghi, Shahrouz Alimohammadi, and Thomas Bewley.
\newblock Uncertainty quantification of the time averaging of a statistics
  computed from numerical simulation of turbulent flow.
\newblock {\em arXiv preprint arXiv:1802.01056}, 2018.

\bibitem{beyhaghi_3}
Pooriya Beyhaghi and Thomas Bewley.
\newblock Implementation of cartesian grids to accelerate delaunay-based
  derivative-free optimization.
\newblock {\em Journal of Global Optimization}, 69(4):927--949, 2017.

\bibitem{beyhaghi_2}
Pooriya Beyhaghi and Thomas~R Bewley.
\newblock Delaunay-based derivative-free optimization via global surrogates,
  part ii: convex constraints.
\newblock {\em Journal of Global Optimization}, pages 1--33, 2016.

\bibitem{beyhaghi_1}
Pooriya Beyhaghi, Daniele Cavaglieri, and Thomas Bewley.
\newblock Delaunay-based derivative-free optimization via global surrogates,
  part i: linear constraints.
\newblock {\em Journal of Global Optimization}, pages 1--52, 2015.

\bibitem{booker-1999}
Andrew~J Booker, JE~Dennis~Jr, Paul~D Frank, David~B Serafini, Virginia
  Torczon, and Michael~W Trosset.
\newblock A rigorous framework for optimization of expensive functions by
  surrogates.
\newblock {\em Structural optimization}, 17(1):1--13, 1999.

\bibitem{bubeck2011x}
S{\'e}bastien Bubeck, R{\'e}mi Munos, Gilles Stoltz, and Csaba Szepesvari.
\newblock X-armed bandits.
\newblock {\em The Journal of Machine Learning Research}, 12:1655--1695, 2011.

\bibitem{conn2009global}
Andrew~R Conn, Katya Scheinberg, and Lu{\'\i}s~N Vicente.
\newblock Global convergence of general derivative-free trust-region algorithms
  to first-and second-order critical points.
\newblock {\em SIAM Journal on Optimization}, 20(1):387--415, 2009.

\bibitem{conn2009introduction}
Andrew~R Conn, Katya Scheinberg, and Luis~N Vicente.
\newblock {\em Introduction to derivative-free optimization}, volume~8.
\newblock Siam, 2009.

\bibitem{deng2007extension}
Geng Deng and Michael~C Ferris.
\newblock Extension of the direct optimization algorithm for noisy functions.
\newblock In {\em Proceedings of the 39th conference on Winter simulation: 40
  years! The best is yet to come}, pages 497--504. IEEE Press, 2007.

\bibitem{duchon-1977}
Jean Duchon.
\newblock Splines minimizing rotation-invariant semi-norms in sobolev spaces.
\newblock In {\em Constructive theory of functions of several variables}, pages
  85--100. Springer, 1977.

\bibitem{david-2016}
David Goluskin.
\newblock Bounding averages rigorously using semidefinite programming: mean
  moments of the lorenz system.
\newblock {\em Journal of Nonlinear Science}, pages 1--31, 2017.

\bibitem{jones1993lipschitzian}
Donald~R Jones, Cary~D Perttunen, and Bruce~E Stuckman.
\newblock Lipschitzian optimization without the lipschitz constant.
\newblock {\em Journal of optimization Theory and Applications},
  79(1):157--181, 1993.

\bibitem{kleinberg2008multi}
Robert Kleinberg, Aleksandrs Slivkins, and Eli Upfal.
\newblock Multi-armed bandits in metric spaces.
\newblock In {\em Proceedings of the fortieth annual ACM symposium on Theory of
  computing}, pages 681--690. ACM, 2008.

\bibitem{lasserre2015introduction}
Jean~Bernard Lasserre.
\newblock {\em An introduction to polynomial and semi-algebraic optimization},
  volume~52.
\newblock Cambridge University Press, 2015.

\bibitem{mardsen-2004}
Alison~L Marsden, Meng Wang, John~E Dennis~Jr, and Parviz Moin.
\newblock Optimal aeroacoustic shape design using the surrogate management
  framework.
\newblock {\em Optimization and Engineering}, 5(2):235--262, 2004.

\bibitem{mardsen-20041}
Alison~L Marsden, Meng Wang, John~E Dennis~Jr, and Parviz Moin.
\newblock Suppression of vortex-shedding noise via derivative-free shape
  optimization.
\newblock {\em Physics of Fluids}, 16(10):83--86, 2004.

\bibitem{nie2019stochastic}
Jiawang Nie, Liu Yang, and Suhan Zhong.
\newblock Stochastic polynomial optimization.
\newblock {\em Optimization Methods and Software}, pages 1--19, 2019.

\bibitem{norkin1998branch}
Vladimir~I Norkin, Georg~Ch Pflug, and Andrzej Ruszczy{\'n}ski.
\newblock A branch and bound method for stochastic global optimization.
\newblock {\em Mathematical programming}, 83(1-3):425--450, 1998.

\bibitem{oliver-2014}
Todd~A Oliver, Nicholas Malaya, Rhys Ulerich, and Robert~D Moser.
\newblock Estimating uncertainties in statistics computed from direct numerical
  simulation.
\newblock {\em Physics of Fluids (1994-present)}, 26(3):035101, 2014.

\bibitem{picheny-2013}
Victor Picheny, David Ginsbourger, Yann Richet, and Gregory Caplin.
\newblock Quantile-based optimization of noisy computer experiments with
  tunable precision.
\newblock {\em Technometrics}, 55(1):2--13, 2013.

\bibitem{quan2013simulation}
Ning Quan, Jun Yin, Szu~Hui Ng, and Loo~Hay Lee.
\newblock Simulation optimization via kriging: a sequential search using
  expected improvement with computing budget constraints.
\newblock {\em Iie Transactions}, 45(7):763--780, 2013.

\bibitem{Rasmussen-2006}
Carl~Edward Rasmussen.
\newblock {\em Gaussian processes for machine learning}.
\newblock Citeseer, 2006.

\bibitem{rulliere2013exploring}
Didier Rulli{\`e}re, Alaeddine Faleh, Fr{\'e}d{\'e}ric Planchet, and Wassim
  Youssef.
\newblock Exploring or reducing noise?
\newblock {\em Structural and Multidisciplinary Optimization}, 47(6):921--936,
  2013.

\bibitem{salesky-2012}
Scott~T Salesky, Marcelo Chamecki, and Nelson~L Dias.
\newblock Estimating the random error in eddy-covariance based fluxes and other
  turbulence statistics: the filtering method.
\newblock {\em Boundary-layer meteorology}, 144(1):113--135, 2012.

\bibitem{sankaran2010method}
Sethuraman Sankaran, Charles Audet, and Alison~L Marsden.
\newblock A method for stochastic constrained optimization using
  derivative-free surrogate pattern search and collocation.
\newblock {\em Journal of Computational Physics}, 229(12):4664--4682, 2010.

\bibitem{Schonlau-1997}
Matthias Schonlau, William~J Welch, and Donald~R Jones.
\newblock A data-analytic approach to bayesian global optimization.
\newblock In {\em Department of Statistics and Actuarial Science and The
  Institute for Improvement in Quality and Productivity, 1997 ASA conference},
  1997.

\bibitem{sethi-2005}
Suresh~P Sethi, Qing Zhang, and Han-Qin Zhang.
\newblock {\em Average-cost control of stochastic manufacturing systems},
  volume~54.
\newblock Springer Science \& Business Media, 2005.

\bibitem{slivkins2011multi}
Aleksandrs Slivkins.
\newblock Multi-armed bandits on implicit metric spaces.
\newblock In {\em Advances in Neural Information Processing Systems}, pages
  1602--1610, 2011.

\bibitem{Snoek-2012}
Jasper Snoek, Hugo Larochelle, and Ryan~P Adams.
\newblock Practical bayesian optimization of machine learning algorithms.
\newblock In {\em Advances in neural information processing systems}, pages
  2951--2959, 2012.

\bibitem{srinivas2012information}
Niranjan Srinivas, Andreas Krause, Sham~M Kakade, and Matthias~W Seeger.
\newblock Information-theoretic regret bounds for gaussian process optimization
  in the bandit setting.
\newblock {\em Information Theory, IEEE Transactions on}, 58(5):3250--3265,
  2012.

\bibitem{stewart2000mathematics}
Ian Stewart.
\newblock Mathematics: The lorenz attractor exists.
\newblock {\em Nature}, 406(6799):948, 2000.

\bibitem{talgorn2015statistical}
Bastien Talgorn, S{\'e}bastien Le~Digabel, and Michael Kokkolaras.
\newblock Statistical surrogate formulations for simulation-based design
  optimization.
\newblock {\em Journal of Mechanical Design}, 137(2):021405, 2015.

\bibitem{Talnikar-2014}
C~Talnikar, P~Blonigan, J~Bodart, and Q~Wang.
\newblock Parallel optimization for les.
\newblock In {\em Proceedings of the Summer Program}, page 315, 2014.

\bibitem{jenkins1957}
R~Theunissen, A~Di~Sante, ML~Riethmuller, and RA~Van~den Braembussche.
\newblock Confidence estimation using dependent circular block bootstrapping:
  application to the statistical analysis of piv measurements.
\newblock {\em Experiments in Fluids}, 44(4):591--596, 2008.

\bibitem{valko2013stochastic}
Michal Valko, Alexandra Carpentier, and R{\'e}mi Munos.
\newblock Stochastic simultaneous optimistic optimization.
\newblock In {\em International Conference on Machine Learning}, pages 19--27,
  2013.

\bibitem{wahba-1990}
Grace Wahba.
\newblock {\em Spline models for observational data}, volume~59.
\newblock Siam, 1990.

\bibitem{zhao2018active}
Muhan Zhao, Shahrouz~Ryan Alimo, and Thomas~R Bewley.
\newblock An active subspace method for accelerating convergence in
  delaunay-based optimization via dimension reduction.
\newblock In {\em 2018 IEEE Conference on Decision and Control (CDC)}, pages
  2765--2770. IEEE, 2018.

\end{thebibliography}

\end{document}